\newcommand\email[1]{\texttt{#1}}
\newcommand\affil{}
\renewcommand{\@tododisplay}[1]{%
\marginpar{#1}%
}
\renewcommand\@displaytodo[2][\todomark]{%
\@tododisplay{{\todoformat #1~(\ref{todolbl:\thetodo})}}%
\footnote[\thetodo]{#2}%
\global\@todotoks\expandafter{\the\@todotoks\todoitem{#1}{#2}}%
\@todotrue%
}%
\renewcommand\todomark{todo}
\newtheorem{theorem}{Theorem}[section]
\newtheorem{proposition}[theorem]{Proposition}
\newtheorem{lemma}[theorem]{Lemma} 
\newtheorem{corollary}[theorem]{Corollary}
\theoremstyle{definition}
\newtheorem{example}[theorem]{Example}
\newtheorem{remark}[theorem]{Remark}
\newcommand\cat[1]{\ensuremath{\mathbf{#1}}}
\DeclareMathOperator{\im}{im}
\DeclareMathOperator{\coim}{coim}
\DeclareMathOperator{\Neg}{Neg}
\DeclareMathOperator{\coker}{coker}
\DeclareMathOperator{\obj}{obj}
\DeclareMathOperator{\rev}{rev}
\DeclareMathOperator{\rank}{rank}
\DeclareMathOperator{\dom}{coim}
\DeclareMathOperator{\id}{Id}
\DeclareMathOperator{\rad}{rad}
\newcommand{\kVect}[0]{\cat{Vect}}
\newcommand{\kvect}[0]{\cat{vect}}
\newcommand{\op}[0]{\mathrm{op}}
\newcommand{\R}[0]{\mathbb R}
\newcommand{\B}[1]{\mathcal B\ifthenelse{\equal{_{#1}}{}}{}{_{#1}}}
\newcommand{\Bopen}[1]{\mathcal U\ifthenelse{\equal{_{#1}}{}}{}{_{#1}}}
\newcommand{\undecorate}[0]{\xspace \mathcal U\xspace}
\newcommand{\barc}[1]{\mathcal X\ifthenelse{\equal{_{#1}}{}}{}{_{#1}}}
\newcommand{\dual}[0]{^*}
\newcommand{\doubledual}[0]{^{**}}
\newcommand{\D}[0]{\mathcal D}
\newcommand{\E}[0]{\mathbb E}
\newcommand{\I}[0]{\mathcal I}
\renewcommand{\S}[0]{\mathcal S}
\newcommand{\K}[0]{K}
\newcommand{\Nbb}[0]{\mathbb N}
\newcommand{\MatchingCat}[0]{\cat{Mch}}
\newcommand{\RCat}[0]{\cat{R}}
\newcommand{\Inj}[2]{\ensuremath{\alpha_{#1}^{#2}}}
\newcommand{\shift}[2]{{#1(#2)}}
\DeclareMathOperator{\ShiftOp}{Shift}
\newcommand{\Shift}[1]{{\ShiftOp_#1}}
\newcommand{\transition}[3]{\varphi_{#1}(#2,#3)}
\newcommand{\translation}[2]{\varphi_{#1}^{#2}}
\newcommand{\wildcard}{{\mkern2mu\cdot\mkern2mu}}
\newcommand{\birth}{b}
\newcommand{\death}{d}
\newcommand{\interval}[2]{{\langle #1 \mkern1mu, #2 \rangle}}
\newcommand{\intervalSub}[2]{{\langle #1 \mkern1mu , \mkern2.5mu #2 \rangle}}
\newcommand{\elements}[1]{{\mathrm{Rep}(#1)}}
\newcommand{\pfd}{p.f.d.\@\xspace}
\tikzset{strike thru/.style={
    decoration={markings, mark=at position 0.5 with {
        \draw [-] 
            ++ ( 0pt,-1.25pt) 
            -- ( 0pt, 1.25pt);}
    },
    postaction={decorate},
}}
\newcommand*{\matching}{%
  \mathrel{%
    \mathpalette\@vneq{\rightarrow}%
  }%
}
\newcommand*{\@vneq}[2]{%
  \sbox0{\raisebox{\depth}{$#1\neq$}}%
  \sbox2{\raisebox{\depth}{$#1|\,\m@th$}}%
  \ifdim\ht2>\ht0 %
    \sbox2{\resizebox{\vneqxscale\width}{\vneqyscale\ht0}{\unhbox2}}%
  \fi
  \sbox2{$\m@th#1\vcenter{\copy2}$}%
  \ooalign{%
    \hfil\phantom{\copy2}\hfil\cr
    \hfil$#1#2\m@th$\hfil\cr
    \hfil\copy2\hfil\cr
  }%
}
\newcommand*{\vneqxscale}{1}
\newcommand*{\vneqyscale}{.3}
\begin{document}
\title{Induced Matchings and the Algebraic Stability of~Persistence Barcodes}

\author{
Ulrich Bauer%
\thanks{\affil{Technische Universität München, Germany}.
\email{mail@ulrich-bauer.org}}
\ and Michael Lesnick%
\thanks{\affil{Institute for Mathematics and its Applications, %
Minneapolis, MN, USA}.
\email{mlesnick@ima.umn.edu}}
}

\date{January 26, 2015}

\clubpenalty=10000
\widowpenalty = 10000

\maketitle

\begin{abstract}
We define a simple, explicit map sending a morphism $f:M\to N$ of pointwise finite dimensional persistence modules to a matching between the barcodes of $M$ and $N$.  Our main result is that, in a precise sense, the quality of this matching is tightly controlled by the lengths of the longest intervals in the barcodes of $\ker{f}$ and $\coker{f}$. 

As an immediate corollary, we obtain a new proof of the \emph{algebraic stability theorem} for persistence barcodes \cite{chazal2009proximity,chazal2012structure}, a fundamental result in the theory of persistent homology.  In contrast to previous proofs, ours shows explicitly how a $\delta$-interleaving morphism between two persistence modules induces a $\delta$-matching between the barcodes of the two modules.  Our main result also specializes to a structure theorem for submodules and quotients of persistence modules, and yields a novel ``single-morphism" characterization of the interleaving relation on persistence modules.
\end{abstract}

\section{Introduction}\label{Sec:Intro}
Persistent homology, a topological tool for analyzing the global, non-linear, geometric features of data, is one of the primary objects of study in applied topology.  It provides simple, readily computed invariants, called \emph{barcodes}, of a variety of types of data, such as finite metric spaces and $\R$-valued functions.
A barcode is simply a collection of intervals in $\R$; we regard each interval as the lifespan of a topological feature of our data, and we interpret the length of the interval as a measure of the significance of that feature.  

To obtain a barcode from data, we proceed in three steps:
\begin{enumerate}
\item We first associate to the data a \emph{filtration}, a family of topological spaces $F=\{F_t\}_{t\in \R}$ such that $F_s\subseteq F_t$ whenever $s\leq t$.  For example, if our data is a function $\gamma:T\to \R$, we
may take $F=\S^\gamma$, where \[\S^\gamma_t=\{x\in T \mid \gamma(x)\leq t\};\]  we call $\S^\gamma$ the \emph{sublevel set filtration of $\gamma$}.  
\item We then apply $i^{\rm th}$ singular homology with coefficients in a field to each space $F_t$ and each inclusion $F_s\hookrightarrow F_t$.
This yields a \emph{persistence module} $H_i(F)$ for any $i\geq 0$, i.e., a diagram of vector spaces indexed by the totally ordered set $\R$.  
\item A persistence module $M$ is said to be \emph{pointwise finite dimensional (\pfd)} if each of the vector spaces $M_t$ is finite dimensional.  The structure theorem of \cite{crawley2012decomposition} yields a barcode invariant $\B M$ of any \pfd persistence $M$; $\B M$ specifies the decomposition of $M$ into indecomposable summands.  Thus, under mild assumptions on~$F$, we obtain a barcode invariant $\B {H_i(F)}$ of our data for each $i\geq 0$.  
\end{enumerate}
In the last fifteen years, these invariants have been applied widely to the study of scientific data \cite{edelsbrunner2010computational,Carlsson2014Topological}, and have been the subject of a great deal of theoretical interest.

\paragraph{The Algebraic Stability Theorem}
In 2005, Cohen-Steiner, Edelsbrunner, and Harer introduced a stability result for the persistent homology of $\R$-valued functions \cite{cohen2007stability}.  In brief, the result tells us that the map sending an $\R$-valued function to its persistence barcode is 1-Lipschitz with respect to suitable choices of metrics.    

In 2009, Chazal, Cohen-Steiner, Glisse, Guibas, and Oudot showed that the stability result of \cite{cohen2007stability} admits a purely algebraic generalization \cite{chazal2009proximity}.  This generalization, known as the \emph{algebraic stability theorem}, asserts that if there exists a \emph{$\delta$-interleaving} (a sort of ``approximate isomorphism'') between two \pfd persistence modules $M$, $N$, then there exists a \emph{$\delta$-matching} (approximate isomorphism) between the barcodes $\B{M}$, $\B{N}$ of these persistence modules.  

The algebraic stability theorem is perhaps the central  theorem in the theory of persistent homology.  It provides the core mathematical justification for the use of persistent homology in the study of noisy data.  The theorem is used, in one form or another, in nearly all available results on the approximation, inference, and estimation of persistent homology.  It has also been the basis for much subsequent theoretical work on persistence.   

While the earlier stability result for the persistent homology of $\R$-valued functions \cite{cohen2007stability} is itself a powerful result with several important applications, the more general algebraic stability theorem offers significant advantages.  First, it allows us to dispense with some technical conditions on the stability result for functions of \cite{cohen2007stability}, thereby yielding a generalization of that result to a wider class of functions.  Second and perhaps more importantly, the algebraic stability theorem offers a formalism for comparing the barcode invariants of functions defined on different domains, or for comparing barcode invariants of data that do not directly arise from functions at all.  In practice, this added flexibility is quite valuable: It allows us to establish fundamental approximation and inference theorems for persistent homology that would otherwise be much harder to come by \cite{chazal2009analysis,chazal2013clustering,chazal2009gromov,Chazal2013Persistence,edelsbrunner2013persistent}.  For one example, the algebraic stability theorem has been applied to establish the consistency properties of Rips complex-based estimators for the persistent homology of probability density functions \cite{chazal2013clustering}.

 \goodbreak
 
\paragraph{The Isometry Theorem}
In fact, the converse to the algebraic stability theorem also holds: There exists a {$\delta$-interleaving} between \pfd persistence modules $M$ and $N$ \emph {if and only if} there exists a {$\delta$-matching} between $\B{M}$ and $\B{N}$.  The algebraic stability theorem and its converse are together known as the \emph{isometry theorem}. 
A slightly weaker formulation of the isometry theorem establishes a relationship between the \emph{interleaving distance} (a pseudometric on persistence modules) and the \emph{bottleneck distance} (a widely studied pseudometric on barcodes): It says that the interleaving distance between $M$ and $N$ is equal to the bottleneck distance between $\B{M}$ and $\B{N}$.

Given the structure theorem for persistence modules \cite{crawley2012decomposition}, the converse algebraic stability theorem admits a very simple, direct proof.  The converse was first proven for \pfd persistence modules in \cite{lesnick2013theory}.  Later proofs appeared, independently, in \cite{chazal2012structure} (in a slightly more general setting), and in \cite{Bubenik2014Categorification} (in a special case). 
We give the proof of the converse algebraic stability theorem in \cref{Sec:ConverseToASP}, following \cite{lesnick2013theory}.  

The isometry theorem is interesting in part because the definition of the interleaving distance extends to a variety of generalized persistence settings where the direct definition of the bottleneck distance does not.  For example, interleaving distances can be defined on multidimensional persistence modules \cite{lesnick2013theory} and filtrations \cite{chazal2009proximity,lesnick2012multidimensional,blumberg2015universality}. 
The isometry theorem thus suggests one way to extend the definition of the bottleneck distance to these settings.  Since much of the theory of topological data analysis is formulated in terms of the bottleneck distance \cite{chazal2009analysis, chazal2013clustering, Chazal2013Persistence, balakrishnan2013statistical}, this opens the door to the adaptation of the theory to more general settings.  

Interleaving distances on multidimensional persistence modules and filtered topological spaces satisfy universal properties, indicating that they are, in certain relative senses, the ``right'' generalizations of the bottleneck distance  \cite{lesnick2013theory,lesnick2012multidimensional,blumberg2015universality}.

\paragraph{Earlier Proofs of the Algebraic Stability Theorem}
The original proof of the algebraic stability theorem is an algebraic adaptation of the stability argument for the persistent homology of functions given in \cite{cohen2007stability}.  Owing to its geometric origins, this argument has a distinctly geometric flavor.  In particular, the argument employs an (algebraic) interpolation lemma for persistence modules inspired by an interpolation construction for functions appearing in \cite{cohen2007stability}.  

In 2012, Chazal, Glisse, de Silva, and Oudot \cite{chazal2012structure} presented a reworking of the original proof of  algebraic stability as part of an 80-page treatise on persistence modules, barcodes, and the isometry theorem.  The reworked proof is similar on a high level to the original, but differs in the details.  In particular, it makes use of a characterization of barcodes in terms of \emph {rectangle measures}; these are functions from the set of rectangles in the plane to $\Nbb\cup \{\infty\}$ which have properties analogous to those of a measure.

The original proof of \cite{chazal2009proximity} and the proof given in \cite{chazal2012structure} are, to the best of our knowledge, the only proofs of the algebraic stability theorem in the literature.  
However, in unpublished work from 2011, Guillaume Troianowski and Daniel M\"ullner gave a third proof which, in contrast to the proofs of \cite{chazal2009proximity} and \cite{chazal2012structure}, establishes the theorem as a corollary of a general fact about persistence modules.  

First, in early 2011, Troianowski showed that if $f:M\to N$ is a morphism of persistence modules such that the length of each interval in the barcodes $\B{\ker{f}}$ and $\B{\coker{f}}$ is at most $\epsilon$, then the bottleneck distance between $\B{M}$ and $\B{N}$ is at most $2\epsilon$.  Troianowski never made this result public, but he mentioned it to the second author of this paper.  Troianowski's result implies that if the interleaving distance between $M$ and $N$ is $\delta$, then the bottleneck distance between $\B{M}$ and $\B{N}$ is at most $4\delta$.  The result thus implies the algebraic stability theorem, up to a factor of 4.

As we were putting the finishing touches on the present paper, we learned that in a September 2011 manuscript never made public \cite{mullner2011proximity}, M\"ullner and Troianowski strengthened Troianowksi's result to show that under the same assumptions on $\ker f$ and $\coker f$ as above, the bottleneck distance between $\B{\shift M {\frac{\epsilon}{2}}}$ and $\B{N}$ is at most $\frac{\epsilon}{2}$; here $\shift M {\frac{\epsilon}{2}}$ denotes the shift of the persistence module $M$ to the left by $\frac{\epsilon}{2}$.  This stronger result implies the algebraic stability theorem.  In turn, the results of the present paper, obtained independently of \cite{mullner2011proximity}, strengthen those of \cite{mullner2011proximity}.  

The proof of M\"{u}llner and Troianowski's result is an adaptation of the original proof of the algebraic stability theorem, and closely follows the technical details of that proof.  
The three existing proofs of algebraic stability are thus rather similar to one another.  In particular, each relies in an essential way on some version of the interpolation lemma.  

The interpolation lemma is interesting and pretty in its own right, and variants of it have found applications apart from the proof of algebraic stability \cite{lesnick2013theory,de2013geometry}.  However, reliance on the interpolation lemma makes the existing proofs of algebraic stability rather indirect: Given a pair of $\delta$-interleaved persistence modules $M$ and $N$, these proofs construct a $\delta$-matching between $\B{M}$ and $\B{N}$ only implicitly, and in a roundabout way, requiring several technical lemmas, a compactness argument, and consideration of a sequence of interpolating barcodes.
 
\subsection{Induced Matchings of Barcodes}
It is natural to ask whether there exists a more direct proof of the algebraic stability theorem which associates to a $\delta$-interleaving between two persistence modules a $\delta$-matching between their barcodes in a simple, explicit way.  

We present such a proof in this paper.  To do so, we define a map $\barc{}$ sending each morphism $f : M \to N$ of pointwise finite dimensional persistence modules to a matching $\barc f$ between the barcodes $\B{M}$ and $\B{N}$.  This map $\barc{}$ is not functorial in general; in fact, we prove that it is impossible to define such a map in a fully functorial way.  
However, $\barc{}$ is functorial on the two subcategories of persistence modules whose morphisms are the monomorphisms and the epimorphisms, respectively.
$\barc f$ is completely determined by the barcodes  $\B{M}$, $\B{N}$, and $\B{\im f}$, and also completely determines these three barcodes.

\paragraph{The Induced Matching Theorem}
We establish the algebraic stability theorem for pointwise finite dimensional persistence modules as an immediate corollary of a general result about the behavior of the matching~$\barc{f}$.  This result, which we call the \emph{induced matching theorem}, tells us that the quality of the matching $\barc{f}$ is tightly controlled by the lengths of the longest intervals in the barcodes $\B{\ker{f}}$ and $\B{\coker{f}}$. Roughly, it says that for any pair $(I,J)\in \B{M} \times \B{N}$  of intervals matched by $\barc{f}$,
\begin{enumerate}[(i)]
\item $J$ is obtained from $I$ by moving both the left and right endpoints of $I$ to the left,
\item if each interval in $\B{\ker f}$ is of length at most~$\epsilon$, then $\barc{f}$ matches all intervals in $\B{N}$ of length greater than $\epsilon$, and the right endpoints of $I$ and $J$ differ by at most $\epsilon$, and
\item dually, if each interval in $\B{\coker f}$ is of length at most $\epsilon$, then $\barc{f}$ matches all intervals in $\B{M}$ of length greater than $\epsilon$, and the left endpoints of $I$ and $J$ differ by at most $\epsilon$.
\end{enumerate}
We give the precise statement of the theorem in \cref{Sec:TheoremStatements}.
\begin{figure}[hbt]
\centerline{\includegraphics[scale=1.]{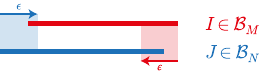}}
{\caption{
The relationship between a pair of intervals matched by $\protect\barc{f}$, as described by the induced matching theorem.
}
}
\end{figure}

\paragraph{A Structure Theorem for Persistence Submodules and Quotients}
Our proof of the induced matching theorem uses none of the technical intermediates appearing in the earlier proofs of the algebraic stability theorem.  %
Instead, our proof centers on an easy but apparently novel structure theorem for submodules and quotients of persistence modules.  This structure theorem is in fact the specialization of the induced matching theorem to the cases $\ker f=0$ and $\coker f=0$, corresponding to $\epsilon=0$ in (ii) and (iii) above.  A slightly weaker version of our structure theorem appears, independently, in the unpublished manuscript \cite{mullner2011proximity}, with a different proof.

Roughly, the structure theorem says that for \pfd persistence modules $K\subseteq M$, there exist canonical choices of 
\begin{enumerate}[(i)]
\item  an injection $\B{K}\hookrightarrow \B{M}$ mapping each interval $I\in\B{K}$ to an interval in $\B{M}$ which contains $I$ and has the same right endpoint as $I$,  
 \item an injection $\B{M/K}\hookrightarrow \B{M}$ mapping each interval $I\in\B{M/K}$ to an interval in $\B{M}$ which contains $I$ and has the same left endpoint as $I$.  
\end{enumerate}

\begin{figure}[hbt]
\centerline{\includegraphics[scale=1.]{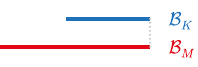} \hfil \includegraphics[scale=1.]{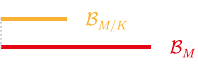}}
{\caption{Intervals in the barcodes $\protect\B{K}$ and $\protect\B{M/K}$, together with their images under the canonical injections $\protect\B{K}\hookrightarrow \protect\B{M}$ and $\protect\B{M/K}\hookrightarrow \protect\B{M}$.}
}
\end{figure}

Our definition of the matching $\barc{f}$ induced by a morphism $f:M\to N$ of persistence modules is derived from the definitions of these canonical injections in a simple way.  We consider the canonical factorization of $f$ through its image,
\[M\twoheadrightarrow \im f\hookrightarrow N.\]
Note that $\im{f}$ is a submodule of $N$, and up to isomorphism, $\im f$ is also a quotient of $M$.  Thus, the structure theorem gives us canonical injections $\B{M}\hookleftarrow \B{\im f}$ and $\B{\im f}\hookrightarrow \B{N}$, which we can interpret as matchings.  We define the matching $\barc{f}$ as a  composition of these two matchings.  See \cref{Sec:PartialMatchings,Sec:ProofOfStructureTheorem,Sec:InducedMatchingDef} for the formal definitions.

\begin{figure}[hbt]
\centerline{\includegraphics[scale=1.]{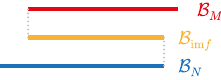}} 
{\caption{The matching $\protect\barc{f}$ between $\protect\B{M}$ and $\protect\B{N}$ is defined via the canonical injections $\protect\B{M}\hookleftarrow \protect\B{\im f}$ and $\protect\B{\im f}\hookrightarrow \protect\B{N}$.}
}
\end{figure}

\paragraph{A Single-Morphism Characterization of the Interleaving Relation}
As an easy corollary of the induced matching theorem and the converse algebraic stability theorem, we obtain a novel characterization of interleaved pairs of persistence modules: for any $\delta\geq 0$, two \pfd persistence modules $M$ and $N$ are $\delta$-interleaved if and only if there exists a morphism $f:M\to N(\delta)$ such that all intervals in $\B{\ker f}$ and $\B{\coker f}$ are of length at most $2\delta$.

\paragraph{The Isometry Theorem for q-Tame Persistence Modules}
Because of the structure theorem \cite{crawley2012decomposition} and the induced matching theorem, which hold for \pfd persistence modules, the setting of \pfd persistence modules is a very convenient one in which to formulate the isometry theorem.  In particular, in this setting we can formulate a sharp version of the isometry theorem, which appears here for the first time; see \cref{delta_matching_remark} and \cref{Thm:Isometry}.

On the other hand, as explained in \cite{chazal2012structure, Chazal2013Persistence}, there is good reason to want a version of the isometry theorem for \emph{q-tame persistence modules}.  These are persistence modules~$M$ for which the maps $M_s\to M_t$ are of finite rank whenever $s < t\in \R$.  Clearly, any \pfd persistence modules is q-tame.  q-tame persistence modules that are not necessarily \pfd arise naturally in the study of compact metric spaces and continuous functions on compact triangulable spaces \cite{chazal2012structure, Chazal2013Persistence}.  With this in mind, \cite[Theorem 4.11]{chazal2012structure} presents a version of the isometry theorem for q-tame persistence modules.  The chief difficulty in this, relative to the \pfd case, is in selecting a suitable definition the barcode of a q-tame persistence module; see \cref{Sec:QTameASP} and \cite{chazal2014observable}.

In \cref{Sec:QTameASP,Sec:ConverseToASP}, we show that \cite[Theorem 4.11]{chazal2012structure} follows easily from the isometry theorem for \pfd persistence modules.  In particular, we make clear in \cref{Sec:QTameASP} that the algebraic stability theorem for q-tame persistence modules is an easy corollary of the induced matching theorem.

\section{Preliminaries}\label{Sec:Preliminaries}
This section presents the basic definitions and notation
that we will use throughout the paper. 

\subsection{Persistence Modules and Barcodes}

\paragraph{Persistence Modules}\label{Sec:PersistenceModules}
For $\K$ a field, let $\kVect$ denote the category of vector spaces over~$\K$, and let $\kvect\subset \kVect$ denote the subcategory of finite dimensional vector spaces.
Let $\RCat$ denote the real numbers, considered as a poset category. That is, $\hom_\RCat(s,t)$ consists of a single morphism for $s\leq t$ and is empty otherwise.

As indicated in \cref{Sec:Intro}, a persistence module is a diagram of $\K$-vector spaces indexed by $\RCat$, %
i.e., a functor $\RCat\to\kVect$. 
The persistence modules form a category $\kVect^\RCat$ whose morphisms are natural transformations. 

Concretely, this means that a persistence module $M$ assigns to each $t\in \R$ a vector space~$M_t$, and to each pair $s\leq t\in \R$ a linear map $\transition M s t:M_s\to M_t$ in such a way that 
for all $r\leq s\leq t\in \R$, 
\[
\varphi_M(t,t)=\id_{M_t}
\qquad \text{and} \qquad
\varphi_M(s,t)\circ\varphi_M(r,s)=\varphi_M(r,t) .
\]
We call the maps $\varphi_M(s,t)$ \emph{transition maps}. In this notation, a morphism $f:M\to N$ of persistence modules is exactly a collection of maps $\{f_t:M_t\to N_t\}_{t\in \R}$ such that for all $s\leq t\in \R$ the following diagram commutes:
\[
\begin{tikzcd}[column sep=7.5ex]
  M_s \arrow{r}{\transition M s t} \arrow{d}[swap]{f_s}
& M_t \arrow{d}{f_t} \\
  N_s \arrow{r}[swap]{\transition N s t}
& N_t  
\end{tikzcd}
\]
We say $f$ is a monomorphism (epimorphism) if $f_t$ is an injection (surjection) for all $t\in \R$.

As already noted, we say $M$ is pointwise finite dimensional (\pfd) if $\dim M_t < \infty$ for all $t\in \R.$  Let $\kvect^\RCat$ denote the full subcategory of $\kVect^\RCat$ whose objects are \pfd persistence modules.  %

\paragraph{Interval Persistence Modules}\label{Sec:IntervalModules}
We say $I\subseteq \R$ is an interval if $I$ is non-empty and  $r,t\in I$ implies $s\in I$ whenever $r\leq s\leq t$.  Let ${\mathcal I_\R}$ denote the set of all intervals in $\R$.   
For $I\in \I_\R$, define the \emph{interval persistence module} $C(I)$ by
\begin{align*}
C(I)_t&=
\begin{cases}
\K &{\textup{if }} t\in I, \\
0 &{\textup{ otherwise}.}
\end{cases}\\
\transition {C(I)} s t&=
\begin{cases}
\id_\K &{\textup{if }} s,t\in I,\\
0 &{\textup{ otherwise}.}
\end{cases}
\end{align*}

\paragraph{Multisets}\label{Sec:Multisets}
Informally, a multiset is a collection where elements may repeat multiple times.  For example, $\{x,x,y\}$ is a multiset, where the element~$x$ appears with multiplicity 2 and the element~$y$ appears with multiplicity $1$.  

Formally, a multiset is a pair $(S,m)$, where $S$ is a set and $m:S\to \Nbb$ is a function.  Here $\Nbb$ denotes the positive integers.  One sometimes also allows $m$ to take values in a larger set of cardinals, but we will not do that here.  

For a multiset $\S=(S,m)$ define the \emph{representation} of $\S$ to be the set \[\elements\S=\{(s,k)\in S\times \Nbb \mid  k\leq m(s)\}.\]
We will generally work with multisets by way of their representations.

\paragraph{Barcodes}
We define a \emph{barcode} to be the representation of a multiset of intervals in $\R$.  In the literature, barcodes are also often called \emph{persistence diagrams}. Formally, the elements of a barcode are pairs $(I,k)$, with $I\in \I_\R$ and $k\in \Nbb$.    
In what follows, we will  denote an element $(I,k)$ of a barcode simply as $I$, suppressing the index $k$, and call such an element an interval.

Using \cite[Theorem 1]{azumaya1950corrections}, the authors of \cite{chazal2012structure} observe that in general, if $M$ is a (not necessarily \pfd) persistence module such that \[M\cong \bigoplus_{I\in \B{M}} C(I)\] for some barcode $\B{M}$, then $\B{M}$ is uniquely determined.  We say that $M$ is \emph{interval-decomposable} and call $\B{M}$ the \emph{barcode of~$M$}.
\begin{theorem}[Structure of \pfd Persistence Modules \cite{crawley2012decomposition}]
Any \pfd persistence module is interval-decomposable.  
\end{theorem}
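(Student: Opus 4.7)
The plan is to produce, for an arbitrary pfd persistence module $M$, an explicit decomposition $M \cong \bigoplus_{I} C(I)$ indexed by a multiset of intervals; once such a decomposition exists, Azumaya's theorem (already invoked in the excerpt) takes care of uniqueness, so the real content is existence. I would follow the functorial-filtration strategy that underlies Crawley-Boevey's original argument.

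\textbf{Step 1 (Birth/death functions and local filtrations).} For each $t \in \R$ and nonzero $x \in M_t$ I would define
\[
b(x) \;=\; \inf\{s \leq t : x \in \im \varphi_M(s,t)\},
\qquad
d(x) \;=\; \sup\{s \geq t : \varphi_M(t,s)(x) \neq 0\},
\]
together with strict and non-strict variants to handle open versus closed endpoints. Since $\dim M_t < \infty$, these values partition $M_t$ into a finite collection of subquotients $M_t^{b,d}$ indexed by pairs of birth/death thresholds actually attained. The idea is that each $M_t^{b,d}$ should correspond to the summands $C(I)$ whose interval $I$ is centered on $t$ with endpoints $(b,d)$.

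\textbf{Step 2 (Global assembly of interval summands).} The main task is to choose, simultaneously for every $t \in \R$, lifts of bases of the subquotients $M_t^{b,d}$ in a way that is compatible with the transition maps, so that each chosen lift extends uniquely to an interval submodule isomorphic to $C(I)$, and so that these submodules together span $M_t$ and are independent at every $t$. I would organize this via a Zorn's-lemma / transfinite-recursion argument on maximal families of pairs $(I_\alpha, x_\alpha)$ with $x_\alpha \in M_{t_\alpha}$, at each stage using pointwise finite-dimensionality both to guarantee that the subquotient picture is actually finite at each $t$ and to ensure that a non-spanning family can always be enlarged. This compatibility step is the main obstacle: one must show that local bases chosen at different values of $t$ can be reconciled with the transition maps, and that the resulting interval submodules are independent not just pointwise at a single $t$ but uniformly in $t$.

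\textbf{Step 3 (Conclusion).} Once a maximal independent family $\{(I_\alpha, x_\alpha)\}$ is produced and shown to span every $M_t$, the submodules generated by the $x_\alpha$ give the desired decomposition $M \cong \bigoplus_\alpha C(I_\alpha)$. Uniqueness of the multiset $\B{M}$ then follows from the Krull--Schmidt--Azumaya theorem, exactly as noted in the paragraph preceding the statement. The anticipated hard part is squarely Step~2; Steps~1 and~3 are essentially bookkeeping once the functorial filtration by $b$ and $d$ has been set up and once pointwise finite-dimensionality is used to make everything locally finite.
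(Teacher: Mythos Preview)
The paper does not prove this theorem; it is stated with a citation to Crawley-Boevey \cite{crawley2012decomposition} and used throughout as a black box. So there is no ``paper's own proof'' to compare against. Your outline follows the shape of Crawley-Boevey's original functorial-filtration argument, which is the appropriate source to consult if you want a complete proof.

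That said, a brief comment on the proposal itself: Step~2 is doing essentially all the work, and as written it is a description of what one hopes to accomplish rather than an argument. The genuine difficulty in Crawley-Boevey's proof is precisely the global coherence you flag---showing that one can simultaneously choose compatible lifts across all $t \in \R$ so that the resulting interval submodules are independent and spanning. The actual mechanism Crawley-Boevey uses is more delicate than a bare Zorn's-lemma enlargement: he works with specific ``cuts'' of $\R$ (pairs of complementary downward- and upward-closed sets) to handle the four endpoint decorations uniformly, and proves a key covering lemma ensuring that the image/kernel subspaces associated to these cuts stabilize pointwise thanks to finite-dimensionality. Without that lemma, the step ``a non-spanning family can always be enlarged'' is not obviously true, because enlarging at one $t$ may conflict with choices already made at other values. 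If you intend to write out a full proof, you will need to engage with that covering argument rather than invoke maximality abstractly.
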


\subsection{The Category of Matchings}\label{Sec:PartialMatchings}

A \emph{matching} from $S$ to $T$ (written as $\sigma:S\matching T$) is a bijection $\sigma:S' \to T'$, for some $S'\subseteq S$, $T'\subseteq T$; we denote $S'$ as $\dom \sigma$ and $T'$ as $\im \sigma$.
Formally, we regard $\sigma$ as a relation $\sigma\subseteq S\times T$, where $(s,t)\in \sigma$ if and only if $s\in \dom \sigma$ and $\sigma(s)=t$. %
  We define the \emph{reverse matching} $\rev{\sigma}:T\matching S$ %
  in the obvious way.    Note that any injective function is in particular a matching.  

For matchings $\sigma:S\matching T$ and $\tau:T\matching U$, we define the composition \(\tau\circ\sigma:S\matching U\) as
\begin{align*}
\tau\circ\sigma&=\{(s,u) \mid (s,t) \in \sigma, (t,u) \in \tau \text{ for some } t \in T \}.  
\end{align*}
As noted in \cite{edelsbrunner2013persistent}, with this definition we obtain a category $\MatchingCat$ whose objects are sets and whose morphisms are matchings.  

Given a matching $\sigma:S\matching T$, there is a canonical matching $S\matching \dom \sigma$; this matching is the categorical coimage of $\sigma$ in $\MatchingCat$.  Similarly, the canonical matching $\im \sigma \matching T$ is the categorical image of $\sigma$ in $\MatchingCat$.  This justifies our choice of notation for $\dom\sigma$ and $\im\sigma$.

\subsection{Decorated Endpoints and Intervals}
As noted in the introduction, our main results concern matchings of barcodes that move endpoints of intervals in controlled ways.  To make the notion of moving the endpoints of intervals precise in the $\RCat$-indexed setting, we need some formalism.  

Let $D=\{-,+\}$.  Adopting a notational and terminological convention of \cite{chazal2012structure}, we define the set~$\E$ of \emph{(decorated) endpoints} by $\E=\R \times D \cup \{-\infty,\infty\}$.  For $t\in \R$, we will write the decorated endpoints $(t,-)$ and $(t,+)$ as $t^-$ and $t^+$, respectively.   

We define a total order on $D$ by taking $- {} < {} +$.  The lexicographic total ordering on $\R \times D$ then extends to a total ordering on $\E$ by taking $-\infty$ to be the minimum element and taking $\infty$ to be the maximum element. 

We define an addition operation $(\wildcard) + (\wildcard)  :\E\times \R\to \E$ by taking
$s^\pm+t=(s+t)^\pm$ and $\pm\infty+t=\pm\infty$
for all $s,t\in \R$.  We define a subtraction operation $(\wildcard) - (\wildcard) :\E\times \R\to \E$ by taking $e-t=e+(-t)$ for $(e,t)\in \E\times \R$.    

There is a sensible bijection from the set $\{(\birth,\death)\in\E \times \E \mid \birth<\death\}$ to $\I_\R$, the set of intervals in $\R$, so that we may regard intervals as ordered pairs of decorated endpoints. This bijection is specified by the following table:
\[
\begin{tabular}{c|ccc}
&$t^-$&$t^+$&$\infty$\\
\hline
$-\infty$&$(-\infty,t)$&$(-\infty,t]$&$(-\infty,\infty)$\\
$s^-$&$[s,t)$&$[s,t]$&$[s,\infty)$\\
$s^+$&$(s,t)$&$(s,t]$&$(s,\infty)$\\
\end{tabular}
\]
For example, for $s<t\in \R$ the bijection sends $(s^+,t^-)$ to the interval $(s,t)$ and sends $( s^-,s^+)$ to the one-point interval $[s,s]$.
We will always denote the interval specified by $\birth<\death$ as $\interval{ \birth}{\death}$ to avoid confusion with the usual notation for open intervals.
Note that $\interval{ \birth}{\death} \subseteq \interval{ \birth'}{\death'}$ whenever $\birth'\leq \birth<\death\leq \death'$.

\section{The Isometry Theorem}\label{Sec:IsometryTheorem}
In this section we give the precise statement of the isometry theorem for \pfd persistence modules.   We discuss the isometry theorem for q-tame persistence modules in \cref{Sec:QTameASP,Sec:ConverseToASP}.
\subsection{Interleavings and the Interleaving Distance}
\label{InterleavingsOfModules} 

\paragraph{Distances}\label{Sec:NextSection}
An \emph{extended pseudometric} on a class %
$X$ is a function $d:X\times X\to [0,\infty]$ with the following three properties:
\begin{enumerate*}%
\item $d(x,x)=0$ for all $x\in X$,
\item $d(x,y)=d(y,x)$ for all $x,y\in X$,
\item $d(x,z)\leq d(x,y)+d(y,z)$ for all $x,y,z\in X$ such that $d(x,y),d(y,z) < \infty$.
\end{enumerate*}
Note that an extended pseudometric $d$ is a metric if $d$ is finite and $d(x,y)=0$ implies $x=y$.
In this paper, by a \emph{distance} we will mean an extended pseudometric.  

\paragraph{Shift Functors}
For $\delta\in \R$, we define the \emph{shift functor} $\shift{(\wildcard)}{\delta}:\kVect^\RCat \to \kVect^\RCat$ as follows: For $M$ a persistence module we let $\shift{M}{\delta}$ be the persistence module such that for all $t\in \R$ we have $\shift{M}{\delta}_t=M_{t+\delta}$, and for all $s\leq t\in \R$ we have $\transition {\shift{M}{\delta}} s t = \transition M {s+\delta} {t+\delta}$.
For a morphism $f\in \hom(\kVect^\RCat)$ we define $f(\delta)$ by taking $\shift{f}{\delta}_t=f_{t+\delta}$.
Note that the barcode $\B {M(\delta)}$ is obtained from $\B M$ by shifting all intervals to the left by $\delta$, as in \cref{Fig:Shift}.
\begin{figure}[hbt]
\centerline{\includegraphics[scale=1.]{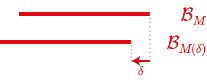}}
{\caption{
Corresponding intervals in $\protect\B{M}$ and $\protect\B{M(\delta)}$.
}
\label{Fig:Shift}
}
\end{figure}

\paragraph{Transition Morphisms}
For $M$ a  persistence module and $\delta\geq 0$, let the \emph{$\delta$-transition morphism} $\translation M \delta :M\to \shift{M}{\delta}$ be the morphism whose restriction to $M_t$ is the linear map $\transition M t {t+\delta}$, for all $t\in \R$.  

\paragraph{Interleavings}
We say that two persistence modules $M$ and $N$ are \emph{$\delta$-interleaved} if there exist morphisms $f:M\to \shift{N}{\delta}$ and $g:N\to \shift{M}{\delta}$ such that 
\begin{align*}
\shift g \delta \circ f&=\translation M {2\delta},\\
\shift f \delta \circ g&=\translation N {2\delta}. 
\end{align*}
We refer to such $f$ and $g$ as \emph{$\delta$-interleaving} morphisms. 
The definition of $\delta$-interleaving morphisms was introduced in \cite{chazal2009proximity}.

\begin{remark}\label{EasyInterleavingRemark}
It is easy to show that if $L$ and $M$ are $\delta$-interleaved, and $M$ and $N$ are $\delta'$-interleaved, then $L$ and $N$ are $(\delta+\delta')$-interleaved.  Similarly, if $0\leq\delta \leq \delta'$ and $M$ and $N$ are $\delta$-interleaved, then $M$ and $N$ are also $\delta'$-interleaved.  
\end{remark}
 
\begin{example}\label{Ex:InterleavingsFromFunctions}  Here is one of the central examples of a $\delta$-interleaving in topological data analysis: For $T$ a topological space and functions $\gamma, \kappa:T\to \R$, let \[d_\infty(\gamma,\kappa)=\sup_{y\in T} |\gamma(y)-\kappa(y)|.\]  Suppose $d_\infty(\gamma,\kappa)=\delta$.
Then for each $t\in \R$, we have inclusions
\begin{align*}
\S^{\gamma}_t\subseteq \S^{\kappa}_{t+\delta},\\
\S^{\kappa}_t\subseteq \S^{\gamma}_{t+\delta}.
\end{align*}
Applying the $i^\text{th}$ homology functor with coefficients in $\K$ to the collection of all such inclusion maps yields a $\delta$-interleaving between $H_i(\S^{\gamma})$ and $H_i(\S^{\kappa})$.
\end{example}

\paragraph{The Interleaving Distance}

We define $d_I:\obj(\kVect^\RCat)\times \obj(\kVect^\RCat)\to [0,\infty]$, the \emph{interleaving distance}, by taking 
\[d_I(M,N)=\inf\, \{\delta\in [0,\infty) \mid M\textup{ and  }N\textup{ are }\delta\textup{-interleaved}\}.\]
It is not hard to check that $d_I$ is a distance on persistence modules.  In addition, if $M$, $M'$, and $N$ are persistence modules with $M\cong M'$, then $d_I(M,N)=d_I(M',N)$, so $d_I$ descends to a distance on isomorphism classes of persistence modules.

\subsection{$\large\boldsymbol\delta$-Matchings and the Bottleneck Distance}\label{Sec:BottleneckDistance}
For $\D$ a barcode and $\epsilon\geq 0$, let 
\[\D^{\epsilon} = \{ \interval{\birth}{\death } \in \D \mid \birth + \epsilon < \death \}=\{I\in \D\mid [t,t+\epsilon]\subseteq I \textup{ for some }t\in \R\}.\]
Note that $\D^{0}=\D$. 
We define a \emph{$\delta$-matching} between barcodes $\mathcal C$ and $\mathcal D$ to be a matching $\sigma:\mathcal C\matching \mathcal D$ such that
\begin{enumerate*}%
\item $\mathcal C^{2\delta} \subseteq \dom{\sigma}$,
\item $\mathcal D^{2\delta}\subseteq \im \sigma$,
\item if 
$\sigma\interval{ \birth}{\death }=\interval{ \birth'}{\death' }$, then
\begin{align*}
\interval{ \birth}{\death }&\subseteq \interval{ \birth'-\delta}{\death'+\delta },\\
\interval{ \birth'}{\death' }&\subseteq\interval{ \birth-\delta}{\death+\delta }.
\end{align*}
\end{enumerate*}
See \cref{Fig:Bottleneck} for an example.  

\begin{remark}\label{Rmk:MatchingsTriangleIneq}
Note that for barcodes $\mathcal C$, $\mathcal D$, and $\mathcal E$, $\sigma_1:\mathcal C\matching \mathcal D$ a $\delta_1$-matching, and $\sigma_2:\mathcal D\matching \mathcal E$ a $\delta_2$-matching, $\sigma_2\circ \sigma_1:\mathcal C\to \mathcal E$ is a $(\delta_1+\delta_2)$-matching.  
\end{remark}

We define the bottleneck distance $d_B$ by
\[d_B(\mathcal C,\mathcal D)=\inf\, \{\delta\in [0,\infty) \mid \exists\textup{ a }\delta\textup{-matching between }\mathcal C\textup{ and }\mathcal D\}.\]
The triangle inequality for $d_B$ follows immediately from \cref{Rmk:MatchingsTriangleIneq}.

$d_B$ is the most commonly considered distance on barcodes in the persistent homology literature.  This is in part because $d_B$ is especially well behaved from a theoretical standpoint.

\begin{figure}[hbt]
\centerline{\includegraphics[scale=1.]{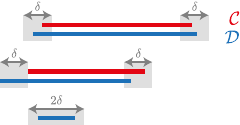}}
{\caption{
A $\delta$-matching between two barcodes $\mathcal C$ and $\mathcal D$.  Endpoints of matched intervals are at most $\delta$ apart, and unmatched intervals are of length at most $2\delta.$}
\label{Fig:Bottleneck}
}
\end{figure}

\begin{remark}\label{delta_matching_remark}
Our definition of a $\delta$-matching is slightly stronger than the one appearing in \cite[Section 4.2]{chazal2012structure}, which is insensitive to the decoration of the endpoints of intervals.  Using the stronger definition of $\delta$-matching allows us to state slightly sharper stability results.  However, regardless of which definition of $\delta$-matching one uses, the definition of the bottleneck distance one obtains is the same.   
\end{remark}
\paragraph{Bottleneck Distance as an Interleaving Distance}
As an aside, note that we can regard a barcode $\mathcal D$ as an object in $\MatchingCat^\RCat$, the category whose objects are functors $\RCat \to \MatchingCat$ and whose morphisms are natural transformations: For each real number~$t$ we let $\mathcal D_t$ be the subset of $\mathcal D$ consisting of all intervals which contain $t$, and for each~$s\leq t$ we define the transition matching $\phi_{\mathcal D}(s,t): \mathcal D_s \matching \mathcal D_t$ 
to be the identity on $\mathcal D_s\cap \mathcal D_t$.

Viewing barcodes as objects of $\MatchingCat^\RCat$ in this way, it is possible to define interleavings and an interleaving distance on barcodes in essentially the same way as for persistence modules.  The reader may check that $\delta$-interleavings of barcodes as thus defined correspond exactly to $\delta$-matchings of barcodes.  Thus, $d_B$ is equal to the interleaving distance on barcodes.
See \cite{Bauer2015Persistence} for details.

\subsection{The Isometry Theorem}

\begin{theorem}[Isometry Theorem for \pfd Persistence Modules]\label{Thm:Isometry}
Two \pfd persistence modules $M$ and $N$ are $\delta$-interleaved if and only if there exists a $\delta$-matching between $\B M$ and $\B N$. In particular, \[d_B(\B M,\B N)=d_I(M,N).\]
\end{theorem}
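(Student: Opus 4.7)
The plan is to establish the two directions of the biconditional separately and then deduce the distance equality by taking infima. The forward direction is the genuinely new ingredient and follows directly from the induced matching theorem; the converse is a direct construction from the interval decomposition, independently treated in \cref{Sec:ConverseToASP}.

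For the forward direction, suppose $M$ and $N$ are $\delta$-interleaved via $f \colon M \to \shift N \delta$ and $g \colon N \to \shift M \delta$. The key step is to apply the induced matching theorem to $f$, for which I must control the interval lengths in $\B{\ker f}$ and $\B{\coker f}$. The interleaving identity $\shift g \delta \circ f = \translation M {2\delta}$ shows that every $v \in (\ker f)_t$ is mapped to $0$ by $\transition M t {t+2\delta}$, so no interval in $\B{\ker f}$ has length exceeding $2\delta$; dually, $\shift f \delta \circ g = \translation N {2\delta}$ shows that any class in $(\coker f)_t = N_{t+\delta}/\im f_t$ represented by $w$ satisfies $\transition {N(\delta)} t {t+2\delta}(w) = f_{t+2\delta}(g_{t+\delta}(w)) \in \im f_{t+2\delta}$, so intervals in $\B{\coker f}$ also have length at most $2\delta$. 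The induced matching theorem then yields a matching $\barc f \colon \B M \matching \B{\shift N \delta}$ such that every interval of length greater than $2\delta$ in either barcode is matched and every matched pair $(I, I^*)$ with $I = \interval{b}{d}$, $I^* = \interval{b^*}{d^*}$ satisfies $I^* \subseteq I$ with $b - b^*$ and $d - d^*$ bounded by $2\delta$.

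To conclude, I would reinterpret $\barc f$ as a matching $\sigma \colon \B M \matching \B N$ by undoing the shift: each $I^* \in \B{\shift N \delta}$ is $J - \delta$ for a unique $J \in \B N$. Writing $J = \interval{b'}{d'}$, the bounds above become $b - \delta \leq b' \leq b + \delta$ and $d - \delta \leq d' \leq d + \delta$, which is precisely the symmetric endpoint control required of a $\delta$-matching; together with the $2\delta$-length bound for unmatched intervals in both $\B M$ and $\B N$ (using that shifts do not change interval length), this confirms $\sigma$ is a $\delta$-matching. For the converse direction, given a $\delta$-matching $\sigma$, I would construct the interleaving termwise on interval summands $M \cong \bigoplus_{I} C(I)$ and $N \cong \bigoplus_{J} C(J)$: for each matched pair $\sigma(I) = J$, the containments $I \subseteq \interval{b'-\delta}{d'+\delta}$ and $J \subseteq \interval{b-\delta}{d+\delta}$ produce a canonical $\delta$-interleaving of $C(I)$ and $C(J)$ by identity maps on common supports; each unmatched interval has length at most $2\delta$, making the corresponding summand trivially $\delta$-interleaved with $0$. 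Summing yields a $\delta$-interleaving between $M$ and $N$, and the equality $d_B(\B M, \B N) = d_I(M, N)$ follows by taking infima.

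The main obstacle is the bookkeeping in the forward direction, where the leftward endpoint movement produced by $\barc f$ must be combined with the $\delta$-shift identification of $\B{\shift N \delta}$ with $\B N$ to yield the symmetric endpoint control of a $\delta$-matching. This must be carried out with care for the decorations of the endpoints in order to obtain the slightly stronger formulation of $\delta$-matching used in this paper (cf.\ \cref{delta_matching_remark}); the rest of the argument amounts to routine application of previously established results.
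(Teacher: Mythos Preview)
Your proposal is correct and follows essentially the same approach as the paper: the forward direction applies the induced matching theorem to a $\delta$-interleaving morphism after observing that $\ker f$ and $\coker f$ are $2\delta$-trivial (the paper's \cref{Lem:DeltaTrivialityAndInterleavings}) and then composes with the shift bijection $r_\delta$ (this is exactly \cref{Thm:pfdASP}), while the converse is the summand-by-summand construction of \cref{Thm:ConverseStabilityForDecomposableMods} via \cref{Lem:CyclicConverse}. One small slip: the induced matching theorem gives $b^* \leq b < d^* \leq d$, so $I^*$ is shifted to the left of $I$ rather than contained in it, but your subsequent endpoint computations are correct regardless.
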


As noted earlier, the \emph{algebraic stability theorem} (\cref{Thm:pfdASP})  is the ``only if'' half of the isometry theorem: It says that if $M$ and $N$ are $\delta$-interleaved, then there exists a $\delta$-matching between $\B{M}$ and $\B{N}$.  We give a proof of the algebraic stability theorem as an immediate consequence of our induced matching theorem in \cref{Sec:ASP}. In \cref{Sec:ConverseToASP}, we present a proof of the converse, following~\cite{lesnick2013theory}.

As one illustration of the utility of the algebraic stability theorem, note that in view of \cref{Ex:InterleavingsFromFunctions}, the theorem yields the following strengthening of the original stability result for barcodes of $\R$-valued functions \cite{cohen2007stability}:

\begin{corollary}[%
\cite{cohen2007stability,chazal2009proximity}]\label{Cor:Original_Stability}
For any topological space $T$, functions $\gamma, \kappa:T\to \R$, and $i\geq 0$ such that $H_i(\S^\gamma)$ and $H_i(\S^\kappa)$ are \pfd, 
\[d_B(\B{H_i(\S^\gamma)},\B{H_i(\S^\kappa)})\leq d_\infty(\gamma,\kappa).\]
\end{corollary}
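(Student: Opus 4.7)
The plan is to assemble this corollary from two ingredients already in hand: \cref{Ex:InterleavingsFromFunctions} and the algebraic stability (forward) half of \cref{Thm:Isometry}. I would begin by disposing of the trivial case $d_\infty(\gamma,\kappa)=\infty$, for which the asserted inequality holds vacuously in $[0,\infty]$.

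Assuming $\delta := d_\infty(\gamma,\kappa)<\infty$, the next step is to invoke \cref{Ex:InterleavingsFromFunctions}, which produces a $\delta$-interleaving between $H_i(\S^\gamma)$ and $H_i(\S^\kappa)$ by applying $H_i$ to the sublevel-set inclusions $\S^\gamma_t \subseteq \S^\kappa_{t+\delta}$ and $\S^\kappa_t \subseteq \S^\gamma_{t+\delta}$; these inclusions hold precisely because $|\gamma(x)-\kappa(x)|\leq \delta$ for every $x\in T$. The hypothesis that $H_i(\S^\gamma)$ and $H_i(\S^\kappa)$ are pointwise finite dimensional then places us exactly in the setting of \cref{Thm:Isometry}, whose forward direction converts the $\delta$-interleaving into a $\delta$-matching between $\B{H_i(\S^\gamma)}$ and $\B{H_i(\S^\kappa)}$. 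The desired bound $d_B(\B{H_i(\S^\gamma)},\B{H_i(\S^\kappa)})\leq \delta$ then follows immediately from the definition of the bottleneck distance as an infimum over $\delta$-matchings.

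There is no real obstacle here: the corollary amounts to chaining the topological interleaving construction of \cref{Ex:InterleavingsFromFunctions} with the algebraic stability theorem. The only point worth flagging is that the bound is obtained by exhibiting a single $\delta$-matching, so no attainment, compactness, or limiting argument is needed; the pfd hypothesis is exactly what licenses the direct appeal to \cref{Thm:Isometry} in its stated form.
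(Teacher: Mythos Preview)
Your proposal is correct and follows exactly the approach the paper indicates: the corollary is presented there without a formal proof, merely as an immediate consequence of combining \cref{Ex:InterleavingsFromFunctions} with the algebraic stability theorem (the forward direction of \cref{Thm:Isometry}). Your write-up simply spells out this chain of implications, including the harmless case $d_\infty(\gamma,\kappa)=\infty$.
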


\subsection{Dualization of Persistence Modules}
\label{Sec:dualization}
To close this section on preliminaries, we examine the behavior of barcodes under dualization of persistence modules.  %
For more on duality in the context of persistence, see \cite{cohensteiner2008extending,carlsson2009zigzag,de2011dualities}.

Let $\RCat^{\op}$ denote the opposite category of $\RCat$ and let $\Neg: \RCat \to \RCat^{\op}$ be the isomorphism of categories such that for all objects $t\in \R$, $\Neg(t)=-t.$
Given any persistence module $M$, taking the dual of all vector spaces and all linear maps in $M$ gives us a functor $M^\dag:\RCat^{\op}\to\kVect$.  We define $M\dual$, the \emph{dual persistence module of $M$}, by $M\dual=M^\dag\circ \Neg:\RCat\to \kVect$.  If $M$ is \pfd, $M\doubledual$ is canonically isomorphic to $M$.

If $f:M\to N$ is a morphism of persistence modules, we define $f\dual:N\dual\to M\dual$ by taking $(f\dual)_t=(f_{-t})\dual$.
With these definitions, $(\wildcard)\dual$ is a contravariant endofunctor on $\kVect^\RCat$.
When $M$ and $N$ are \pfd, under the canonical identifications of $M\doubledual$ with $M$ and $N\doubledual$ with $N$, we have $f\doubledual=f$.

For $\D$ a barcode, let $\D\dual$ be the barcode $\{-I\mid I\in \D\}$, where we define $-I=\{t\mid -t\in I\}$ for any interval $I$ .
We then have the following easy observation, which we state without proof:
\begin{proposition}\label{Prop:BarcodesUnderDuality}
If $M$ is \pfd, then $\B{M\dual}=(\B{M})\dual$.
\end{proposition}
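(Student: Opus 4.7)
My plan is to reduce the claim to a pointwise computation on interval modules and then invoke the uniqueness of interval decompositions. First, I would verify that for any interval $I \in \I_\R$, the dual interval module $C(I)\dual$ is isomorphic to $C(-I)$. Unwinding the definitions gives $C(I)\dual_t = C(I)_{-t}\dual$, which equals $\K$ exactly when $-t \in I$, i.e., when $t \in -I$, and is $0$ otherwise. The transition map from $s$ to $t$ with $s \leq t$ is, by the definition of $(\wildcard)^\dag$, the dual of the transition $C(I)_{-t} \to C(I)_{-s}$ in $C(I)$ (noting that $-t \leq -s$ in $\R$); this is $\id_\K$ whenever both $s, t \in -I$ and zero otherwise, which is precisely the structure of $C(-I)$.

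Next, since $M$ is \pfd, the structure theorem gives $M \cong \bigoplus_{I \in \B{M}} C(I)$, and at each $t \in \R$ only finitely many summands are nonzero (because $\dim M_t < \infty$). So the direct sum is pointwise finite, and the canonical map
\[
\bigoplus_{I \in \B{M}} C(I)\dual \longrightarrow \Bigl(\bigoplus_{I \in \B{M}} C(I)\Bigr)\dual
\]
is an isomorphism at every $t$, hence an isomorphism of persistence modules. Combining this with the first step yields $M\dual \cong \bigoplus_{I \in \B{M}} C(-I)$. The uniqueness of interval decompositions noted earlier in the excerpt (the consequence of Azumaya's theorem observed in \cite{chazal2012structure}) then forces $\B{M\dual} = \{-I \mid I \in \B{M}\} = (\B{M})\dual$.

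The only step requiring any real thought is the identification $C(I)\dual \cong C(-I)$, and even this is a direct unraveling of the definitions of $(\wildcard)^\dag$ and $\Neg$; I do not anticipate a genuine obstacle. The finiteness of the pointwise direct sums, guaranteed by the \pfd hypothesis, is essential for letting duality commute with the direct sum decomposition, and without it one would be forced to grapple with the distinction between $\bigoplus$ and $\prod$ on the dual side.
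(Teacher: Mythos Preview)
Your proof is correct. The paper actually states this proposition without proof, calling it an ``easy observation,'' so there is no argument to compare against; your approach---checking $C(I)\dual\cong C(-I)$ directly, using the \pfd hypothesis to pass duality through the direct sum pointwise, and invoking uniqueness of interval decompositions---is the natural way to supply the omitted details.
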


\section{The Structure of Persistence Submodules and Quotients}
\label{Sec:ProofOfStructureTheorem}
As a starting point for the main results of this paper, in this section we describe the relationship between the barcode of a \pfd persistence module~$M$ and the barcode of a submodule or quotient of $M$.    

\subsection{Canonical Injections}
To prepare for the main result of the section, we first introduce the definition of a \emph{canonical injection} between barcodes.

\paragraph{Canonical Injections between Enumerated Sets}

Define an \emph{enumerated set} $S$ to be a totally ordered set $S$ such that there exists an order-preserving bijection $S\to \Nbb$ or $S\to \{1,\dots,n\}$.  
For $S,T$ enumerated sets with $|S|\leq |T|$, we define the \emph{canonical injection} $\Inj{T}{S}:S\hookrightarrow T$ by 
\[
\Inj{T}{S}(s_i)=t_i,
\]
where $s_i$ and $t_i$ denote the $i^\text{th}$ element of $S$ and $T$, respectively.
\begin{remark}\label{Lem:Functoriality}
Clearly, for $S,T,U$ enumerated sets with $|S|\leq |T|\leq |U|$, the canonical injections satisfy \[\Inj{U}{S}=\Inj{U}{T}\circ\Inj{T}{S}\]
\end{remark}

\paragraph{Partitions of Barcodes into Enumerated Sets}
Suppose $\S=(S,m)$ is a multiset.  A total order on $S$ induces a canonical total order on $\elements\S$, which is obtained by restricting the lexicographic total order on $S\times \Nbb$ to~$\elements\S$.

For $M$ a \pfd persistence module and $\birth\in \E$, let ${\intervalSub{ \birth}{\wildcard}}_M$ denote the intervals in $\B M$ of the form $\interval{ \birth}{\death} $ for some $\death\in \E$.  Symmetrically, for $\death\in \E$, let ${\intervalSub{\wildcard}{\death}}_M$ denote the intervals in~$\B{M}$ of the form $\interval{ \birth}{\death} $ for some $\birth\in \E$.  Note that \[\B{M}=\coprod_{\death\in \E} {\intervalSub{\wildcard}{\death}}_M=\coprod_{\birth\in \E} {\intervalSub{\birth}{\wildcard}}_M.\]

For each $\birth,\death\in \E$, we regard both ${\intervalSub{ \birth}{\wildcard}}_M$  and ${\intervalSub{\wildcard}{\death}}_M$ as totally ordered sets, with the total order on each set induced by the reverse inclusion relation on intervals, so that larger intervals are ordered before smaller ones.  Thus, for example, if $\interval{ \birth}{\death}, \interval{ \birth}{\death'} \in \B{M}$ and $\death'>\death$, we have that $\interval{ \birth}{\death'}<\interval{ \birth}{\death}$ in the total order on ${\intervalSub{ \birth}{\wildcard}}_M$.
With these choices of total orders, each of the sets ${\intervalSub{ \birth}{\wildcard}}_M$ and ${\intervalSub{\wildcard}{\death}}_M$ is an enumerated set.

\paragraph{Canonical Injections Between Barcodes}
If $M$, $N$ are \pfd persistence modules and $|{\intervalSub{\wildcard}{\death}}_M|\leq |{\intervalSub{\wildcard}{\death}}_N|$ for each $\death\in \E$, then the canonical injections ${\intervalSub{\wildcard}{\death}}_M\hookrightarrow{\intervalSub{\wildcard}{\death}}_N$ assemble into an injection $\B M\hookrightarrow \B N$, which we also call a canonical injection.  Note that this injection maps the $i^{th}$ largest interval of ${\intervalSub{\wildcard}{\death}}_M$ to the $i^{th}$ largest interval of ${\intervalSub{\wildcard}{\death}}_N$ for all $\death \in \E$, $1\leq i\leq |{\intervalSub{\wildcard}{\death}}_M|$, as in \cref{Fig:Canononical_Inj}. 

\begin{figure}[hbt]
\centerline{\includegraphics[scale=1.]{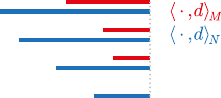}}
{\caption{
The canonical injection ${\intervalSub{\wildcard}{\death}}_M\hookrightarrow{\intervalSub{\wildcard}{\death}}_N$ matches intervals in order of decreasing length.   
}
\label{Fig:Canononical_Inj}
}
\end{figure}

Dually, if we have $|{\intervalSub{\birth}{\wildcard}}_M|\geq |{\intervalSub{\birth}{\wildcard}}_N|$ for each $b\in \E$, then the canonical injections ${\intervalSub{\birth}{\wildcard}}_M\hookleftarrow{\intervalSub{\birth}{\wildcard}}_N$ assemble into a canonical injection $\B M\hookleftarrow \B N$ which maps the $i^{th}$ largest interval of ${\intervalSub{\birth}{\wildcard}}_N$ to the $i^{th}$ largest interval of ${\intervalSub{\birth}{\wildcard}}_M$.

\subsection{Structure Theorem}
We now turn to the main result of this section, our structure theorem for submodules and quotients of persistence modules.

\begin{theorem}[Structure of Persistence Submodules and Quotients]
\label{Prop:StructPropForQuotientsAndSubmodules}
Let $M$ and $N$ be \pfd persistence modules.  
\begin{enumerate}[(i)]
\item If there exists a monomorphism $M\hookrightarrow N$, then 
for each $\death\in \E$ \[|{\intervalSub{\wildcard}{\death}}_M|\leq |{\intervalSub{\wildcard}{\death}}_N|,\]
and the canonical injection $\B{M}\hookrightarrow \B{N}$ %
maps each  $\langle \birth, \death\rangle \in \B{M}$ to an interval $\langle \birth',\death\rangle$ with $\birth' \leq \birth$.
\item Dually, if there exists an epimorphism $M\twoheadrightarrow N$, then 
for each $\birth\in \E$
\[|{\intervalSub{\birth}{\wildcard}}_M|\geq |{\intervalSub{\birth}{\wildcard}}_N|,\]
and the canonical injection $\B{M}\hookleftarrow \B{N}$ %
maps each interval $\langle \birth, \death\rangle\in \B{N}$ to an interval $\langle \birth, \death'\rangle$ with $\death\leq \death'$. 
\end{enumerate}
\end{theorem}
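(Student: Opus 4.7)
Parts (i) and (ii) are dual, so I focus on (i). Unpacking the definition of the canonical injection, (i) is equivalent to the family of inequalities
\[
|\{I\in\B M : b_I\le b,\ d_I = d\}| \;\le\; |\{J\in\B N : b_J\le b,\ d_J = d\}|
\]
ranging over all pairs of decorated endpoints $b<d$ in $\E$: once this is established, sending the $i$-th largest interval of ${\intervalSub{\wildcard}{d}}_M$ to the $i$-th largest of ${\intervalSub{\wildcard}{d}}_N$ lands in an interval of birth at most $b$, and the count inequality $|{\intervalSub{\wildcard}{d}}_M|\le|{\intervalSub{\wildcard}{d}}_N|$ follows by taking $b=\infty$. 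I will factor the displayed inequality through a birth-truncation construction and a death-only lemma.

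\textbf{Step 1 (birth truncation).} For each $b\in\E$ I construct a subfunctor $M^{\le b}\subseteq M$ whose barcode is $\{I\in\B M : b_I\le b\}$, defined intrinsically via images of transition maps rather than via any chosen interval decomposition. Explicitly, for $b = s^-$ I set $(M^{\le s^-})_t = M_t$ for $t<s$ and $(M^{\le s^-})_t = \im\varphi_M(s,t)$ for $t\ge s$; for $b = s^+$ I set $(M^{\le s^+})_t = M_t$ for $t\le s$ and $(M^{\le s^+})_t = \im\varphi_M(s+\epsilon,t)$ for $t>s$ with $\epsilon>0$ sufficiently small (the image stabilises in $\epsilon$ by pfd local finiteness, since only finitely many intervals through $t$ can have births in the window $(s^+,(s+\epsilon)^-]$). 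The cases $b=\pm\infty$ are analogous. Because $M^{\le b}$ is defined purely in terms of $\im\varphi_M$, any monomorphism $f:M\hookrightarrow N$ restricts to a monomorphism $M^{\le b}\hookrightarrow N^{\le b}$.

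\textbf{Step 2 (death-only inequality).} For any monomorphism $g:A\hookrightarrow B$ of pfd modules and any $d\in\E$, I show $|\{I\in\B A:d_I=d\}|\le|\{J\in\B B:d_J=d\}|$. For $d=t^+$, the relevant subspace is $D_A(t^+) := \ker\varphi_A(t,t+\epsilon)\subseteq A_t$; by pfd local finiteness this is independent of $\epsilon$ for small $\epsilon>0$ and has dimension $|\{I\in\B A:d_I=t^+\}|$, and since $g_t$ commutes with transition maps it restricts to an injection $D_A(t^+)\hookrightarrow D_B(t^+)$. For $d=t^-$ I use the quotient
\[
\ker\varphi_A(t-\epsilon,t)\;\Big/\;\sum_{s\in[t-\epsilon,t)}\ker\varphi_A(t-\epsilon,s),
\]
whose dimension equals $|\{I\in\B A : b_I\le (t-\epsilon)^-,\ d_I = t^-\}|$; a short diagram chase using injectivity of $g_{s}$ for $s\in[t-\epsilon,t)$ shows the induced map on these quotients is injective, and letting $\epsilon\to 0^+$ captures all intervals with $d_I=t^-$. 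The case $d=\infty$ uses the stabilised image $\bigcap_{t\ge s}\im\varphi_A(s,t)\subseteq A_s$ in place of $D_A(t^+)$, followed by $s\to\infty$. Applying this lemma to the monomorphism $M^{\le b}\hookrightarrow N^{\le b}$ from Step 1 yields the displayed inequality for every pair $(b,d)$, completing (i).

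\textbf{Part (ii) and main obstacle.} For (ii), I apply (i) to the monomorphism $g\dual:N\dual\hookrightarrow M\dual$ induced by an epimorphism $g:M\twoheadrightarrow N$ and translate via \cref{Prop:BarcodesUnderDuality}: dualisation reflects barcodes across $0$ and interchanges birth with death (with the appropriate flip of decorations), so that ``same death, smaller birth'' on the dual side transports to exactly ``same birth, larger death'' on the original side. The technical heart of the argument, and its main obstacle, is the stabilisation bookkeeping underlying Steps~1 and~2 --- verifying that the various images, kernels, and quotients are independent of $\epsilon$ for all sufficiently small $\epsilon>0$, and handling the two possible decorations of each endpoint uniformly. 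Each such stabilisation is a direct consequence of pfd finite-dimensionality of $M_t$; once they are in place, every claimed injectivity assertion on subspaces and quotients follows automatically from injectivity of $f$ and naturality of the transition maps.
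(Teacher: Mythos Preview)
Your approach is correct and targets exactly the inequality the paper isolates as \cref{Lem:StructPropLemma}, namely
\[
|\{I\in\B M : b_I\le b,\ d_I = d\}| \;\le\; |\{J\in\B N : b_J\le b,\ d_J = d\}|,
\]
but the route you take is more elaborate. You factor through a birth-truncation submodule $M^{\le b}$ and then a separate death-only count, each requiring its own $\epsilon$-stabilisation bookkeeping across the various endpoint decorations. (One slip: in your $d=\infty$ case, $\im\varphi_A(s,t)$ lives in $A_t$, not $A_s$, so the intersection you wrote is ill-typed; what you want is the quotient $A_s\big/\bigcup_{t>s}\ker\varphi_A(s,t)$, which does count the intervals through $s$ with $d_I=\infty$, and on which $g$ induces an injection by the same diagram chase as in your $d=t^-$ case.) The paper instead handles both endpoint constraints at once, at a single well-chosen index: writing $I=\langle b,d\rangle$, since $M$ and $N$ are \pfd there exists $t\in I$ lying to the right of every interval in $\B M\cup\B N$ with left endpoint at most $b$ and right endpoint strictly less than $d$; at this $t$ the subspace
\[
U_t \;=\; \bigcap_{\substack{s\in I\\ s\leq t}} \im \varphi_M(s,t)\ \cap\  \bigcap_{r > I} \ker \varphi_M(t,r)\ \subseteq\ M_t
\]
has dimension equal to the left-hand count above, the analogously defined $V_t\subseteq N_t$ has dimension equal to the right-hand count, and naturality gives $j_t(U_t)\subseteq V_t$. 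This single subspace comparison replaces both of your steps and all of the limiting arguments at once. Your modular separation of birth and death constraints is not without merit and might port more cleanly to other indexing categories, but for the present theorem the paper's argument is shorter and sidesteps precisely the stabilisation bookkeeping you flag as the main obstacle. Both proofs handle part~(ii) identically, by dualising via \cref{Prop:BarcodesUnderDuality}.
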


For $M$ a \pfd persistence module and $I=\interval{\birth}{\death}$,
let ${\intervalSub{\wildcard}{I}}_M\subseteq \B{M}$ denote the intervals in ${\intervalSub{\wildcard}{\death}}_M$ which contain $I$.  The key step in our proof of \cref{Prop:StructPropForQuotientsAndSubmodules}\,(i) is the following.

\begin{lemma}\label{Lem:StructPropLemma}
If $j:M\to N$ is a monomorphism between \pfd persistence modules, then for each interval $I$,
\[|{\intervalSub{\wildcard}{I}}_M|\leq |{\intervalSub{\wildcard}{I}}_N|.\]
\end{lemma}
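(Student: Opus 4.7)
The plan is to exhibit, at a suitable time $t_0 \in \R$, a finite-dimensional subquotient $V_M$ of $M_{t_0}$ whose dimension equals $|{\intervalSub{\wildcard}{I}}_M|$, and to show that the monomorphism $j$ induces an injection from $V_M$ into the analogous subquotient $V_N$ of $N_{t_0}$. Write $I = \langle b, d \rangle$. Every interval in ${\intervalSub{\wildcard}{I}}_M$ contains $I$ by definition, so $|{\intervalSub{\wildcard}{I}}_M|$ is bounded above by $\dim M_t$ for any $t \in I$; hence both ${\intervalSub{\wildcard}{I}}_M$ and ${\intervalSub{\wildcard}{I}}_N$ are finite multisets. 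Fix any $t_0 \in I$.

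Using pointwise finite-dimensionality, I would select real numbers $s \leq t_0$ and $r_-, r_+ \geq t_0$ with $r_- \leq r_+$, depending on both $M$ and $N$ and on the decorations of $b$ and $d$, such that in any interval decomposition of $M$ the subspace $\im \transition M s {t_0}$ is spanned by the summands $C(J)$ with $\beta_J \leq b$ and $t_0 \in J$, the subspace $\ker \transition M {t_0} {r_+}$ is spanned by the summands with $\delta_J \leq d$ and $t_0 \in J$, and the subspace $\ker \transition M {t_0} {r_-}$ is spanned by the summands with $\delta_J < d$ and $t_0 \in J$ --- and the three analogous identifications hold for $N$ with the same choices of $s$, $r_-$, $r_+$. (In the edge cases $b = -\infty$ or $d = \infty$, the relevant image or kernel is replaced by $M_{t_0}$ and $N_{t_0}$.) For instance, if $b = s_*^+$, one picks $s > s_*$ smaller than the real part of every left endpoint in $\B M \cup \B N$ that strictly exceeds $s_*^+$ and belongs to an interval containing $t_0$; this is a finite obstruction by pointwise finite-dimensionality. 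The cases $b = s_*^-$ or $b = -\infty$, and the choices of $r_-, r_+$, are analogous. Setting
\[
V_M \;=\; \frac{\im \transition M s {t_0} \,\cap\, \ker \transition M {t_0} {r_+}}{\im \transition M s {t_0} \,\cap\, \ker \transition M {t_0} {r_-}}
\]
and defining $V_N$ analogously, inspection of the decompositions gives $\dim V_M = |{\intervalSub{\wildcard}{I}}_M|$ and $\dim V_N = |{\intervalSub{\wildcard}{I}}_N|$.

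The commutative squares for $j$ together with the injectivity of each $j_t$ show that $j_{t_0}$ carries both the numerator and the denominator of $V_M$ into those of $V_N$, so it descends to a linear map $V_M \to V_N$. The crux is injectivity of this induced map: if $v$ is in the numerator of $V_M$ with $j_{t_0}(v)$ in the denominator of $V_N$, then
\[
0 \;=\; \transition N {t_0} {r_-}\bigl(j_{t_0}(v)\bigr) \;=\; j_{r_-}\bigl(\transition M {t_0} {r_-}(v)\bigr),
\]
and the injectivity of $j_{r_-}$ forces $\transition M {t_0} {r_-}(v) = 0$, so $v$ already lies in the denominator of $V_M$. The desired inequality $|{\intervalSub{\wildcard}{I}}_M| \leq |{\intervalSub{\wildcard}{I}}_N|$ follows. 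The main obstacle is the bookkeeping imposed by the decorated endpoints: each of the possible decoration-combinations for $b$ and $d$ requires slightly different choices of $s$, $r_-$, and $r_+$, and one must verify that pointwise finite-dimensionality always delivers a single such triple that simultaneously witnesses the three barcode-level identifications above for both $M$ and $N$.
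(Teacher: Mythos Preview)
Your argument is correct and shares its core idea with the paper's proof: both express $|{\intervalSub{\wildcard}{I}}_M|$ and $|{\intervalSub{\wildcard}{I}}_N|$ as dimensions of vector spaces built from images and kernels of transition maps at a single index, and then use naturality together with the injectivity of each $j_t$ to compare them. The paper streamlines your execution in two ways. First, rather than fixing an arbitrary $t_0 \in I$ and passing to a subquotient, the paper chooses $t \in I$ large enough that every interval $\langle b', d' \rangle \in \B{M} \cup \B{N}$ with $b' \leq b$ and $d' < d$ has already ended by time $t$; this forces your denominator to vanish, so one is comparing honest subspaces and the induced-map-on-quotients step disappears. Second, instead of selecting specific parameters $s, r_-, r_+$ by a case analysis on the decorations of $b$ and $d$, the paper describes the relevant subspace intrinsically as
\[
\bigcap_{\substack{s \in I\\ s \leq t}} \im \transition M s t \ \cap\ \bigcap_{r > I} \ker \transition M t r,
\]
which handles all decoration cases at once. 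Your route works, but the bookkeeping you flag at the end is exactly what these two devices eliminate.
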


\begin{proof}
Write $I=\interval{\birth}{\death}$.  
We may assume without loss of generality that \[M=\bigoplus_{J\in \B M} C(J),\qquad N=\bigoplus_{J\in \B N} C(J).\]  
Let \[U=\bigoplus_{J\in {\intervalSub{\wildcard}{I}}_M} C(J), \qquad V=\bigoplus_{J\in {\intervalSub{\wildcard}{I}}_N} C(J).\]
Clearly, $U\subseteq M$ and $V\subseteq N$.

For $J$ any interval and $t\in\R$, we write $t>J$ if $t>s$ for all $s\in J$.  
Since $M$ and $N$ are \pfd, there exists some $t\in I$ such that $t > J$ for all intervals $J=\interval{ \birth'}{\death' }\in \B{M}\cup \B{N}$ with $\birth' \leq \birth$ and $\death' < \death$.
Note that $\dim U_t=|\intervalSub{\wildcard}{I}_M|$ and 
$\dim V_t=|\intervalSub{\wildcard}{I}_N|$.

We claim that $j_t(U_t)\subseteq V_t$.
To see this, 
note that by the choice of~$t$,
\begin{align*}
U_t&=\bigcap_{\substack{s\in I\\ s\leq t}} \im \transition M s t\ \cap\  \bigcap_{r > I} \ker \varphi_M(t,r),\\
V_t&=\bigcap_{\substack{s\in I\\ s\leq t}} \im \transition N s t\ \cap\ \bigcap_{r > I} \ker \varphi_N(t,r).
\end{align*}
For each $s\in I$ such that $s\leq t$,
we have $j_t( \im \transition M s t)\subseteq  \im \transition N s t$ by the commutativity of the following diagram:
\[
\begin{tikzcd}[column sep=7.5ex]
  M_s \arrow{r}{\transition M s t} \arrow{d}[swap]{j_s}
& M_t \arrow{d}{j_t} \\
  N_s \arrow{r}[swap]{\transition N s t}
& N_t  
\end{tikzcd}
\]
Similarly, for each $r > I$, we have $j_t(\ker \varphi_M(t,r))\subseteq \ker \varphi_N(t,r)$.  Thus $j_t(U_t)\subseteq V_t$ as claimed.  
Since $j_t$ is an injection, we have $\dim U_t\leq \dim V_t$,
and the lemma follows.
\end{proof}

\begin{proof}[Proof of \cref{Prop:StructPropForQuotientsAndSubmodules}]
To show that $|{\intervalSub{\wildcard}{\death}}_M|\leq |{\intervalSub{\wildcard}{\death}}_N|,$
 it is enough to observe that for all $i\leq |{\intervalSub{\wildcard}{\death}}_M|$, we have $i\leq |{\intervalSub{\wildcard}{\death}}_N|$.  
 Let $I=\langle \birth, \death\rangle$ be the $i^\text{th}$ interval of the enumerated set ${\intervalSub{\wildcard}{\death}}_M$.  Note that for $1\leq j\leq i$, ${\intervalSub{\wildcard}{I}}_M$ contains the $j^\text{th}$ interval of ${\intervalSub{\wildcard}{\death}}_M$.  Hence
\[i \leq |{\intervalSub{\wildcard}{I}}_M|\leq  |{\intervalSub{\wildcard}{I}}_N|\leq |{\intervalSub{\wildcard}{d}}_N|,\]
where the second inequality follows from \cref{Lem:StructPropLemma}.  

Now let $I'$ denote the $i^\text{th}$ interval of ${\intervalSub{\wildcard}{\death}}_N$.  Recall that the canonical injection $\B{M}\hookrightarrow \B{N}$ matches $I$ to $I'$.  Since $|{\intervalSub{\wildcard}{I}}_M| \leq |{\intervalSub{\wildcard}{I}}_N|$ we must have $I'\in {\intervalSub{\wildcard}{I}}_N$, so $I'=\langle \birth', \death\rangle$ for some $\birth' \leq \birth$.
 This establishes \cref{Prop:StructPropForQuotientsAndSubmodules}\,(i).

\cref{Prop:StructPropForQuotientsAndSubmodules}\,(ii) follows from \cref{Prop:StructPropForQuotientsAndSubmodules}\,(i) by a simple duality argument:
Suppose $q:M\to N$ is an epimorphism of \pfd persistence modules.  Then $q\dual:N\dual\to M\dual$ is a monomorphism. %
\Cref{Prop:StructPropForQuotientsAndSubmodules}\,(i) yields the canonical injection $\B{N\dual}\hookrightarrow \B{M\dual}$ mapping
each $\langle \birth, \death\rangle \in \B{N\dual}$ to an interval $\langle \birth',\death\rangle \in \B{M\dual}$ with $\birth' \leq \birth$.  By \cref{Prop:BarcodesUnderDuality}, this map in turn induces an injection \(\iota:\B{N} \hookrightarrow \B{M} \), which is exactly the canonical injection. Since $\iota$ maps each $\langle \birth, \death\rangle \in \B{N}$ to an interval $\langle \birth, \death'\rangle \in \B{M}$ with $\death\leq \death'$,
the result follows.
\end{proof}

\begin{remark}\label{MVJ_Proof_Of_Struct_Thm}
After we announced our results, Primo\v z \v Skraba and Mikael Vejdemo-Johansson shared with us a nice alternate proof of \cref{Prop:StructPropForQuotientsAndSubmodules}\,(ii) for the special case of finitely generated $\Nbb$-indexed persistence modules.  By duality, this yields \cref{Prop:StructPropForQuotientsAndSubmodules}\,(i) for the special case as well.  

The proof uses the graded Smith Normal Form algorithm for matrices with homogeneous $k[t]$-coefficients, as discussed in  \cite{zomorodian2005computing,skraba2013persistence}.  Here is a brief outline of the argument: Given an epimorphism $M\twoheadrightarrow N$ of finitely generated $\Nbb$-graded persistence modules and a presentation matrix $P_M$ for $M$ in graded Smith Normal Form, we can extend $P_M$ to a presentation matrix $P_N$ for $N$ by adding more columns; see \cite%
{skraba2013persistence}.  By considering the behavior of the graded Smith Normal Form algorithm on $P_N$, we can show that the barcode of $N$ is obtained from the barcode of $M$ by moving the right endpoints of intervals in the barcode of $M$ to the left.  In fact, for this it suffices to consider just the column operations performed by the algorithm, since these are enough to reduce $P_N$ to a form from which we can read off the barcode of $N$ \cite{zomorodian2005computing}.
The special case of \cref{Prop:StructPropForQuotientsAndSubmodules}\,(ii) follows readily.
\end{remark}

\section{Induced Matchings of Barcodes}\label{Sec:InducedMatchingDef}
We now define the map $\barc{}$ sending each morphism $f:M \to N$ of pointwise finite dimensional persistence modules to a matching $\barc f: \B{M}\matching \B{N}$.  

We first define the map for monomorphisms and epimorphisms.  For $j:M\hookrightarrow N$ a monomorphism, we define $\barc{j}:\B{M}\matching \B{N}$ 
to be the canonical injection $\B{M}\hookrightarrow \B{N}$ of \cref{Prop:StructPropForQuotientsAndSubmodules}, considered as a matching.  Dually, for $q:M\twoheadrightarrow N$ an epimorphism, we define $\barc{q}:\B{M}\matching \B{N}$ to be the reverse of the canonical injection $\B{M}\hookleftarrow \B{N}$.
 
Now consider an arbitrary morphism $f:M\to N$ of \pfd persistence modules.  $f$~factors (canonically) as a composition of morphisms 
\[M\stackrel{q_f}\twoheadrightarrow\im f\stackrel{j_f}\hookrightarrow N,\] 
where $q_f$ is an epimorphism and $j_f$ is a monomorphism.  We define 
\[\barc{f}=\barc{j_f}\circ \barc{q_f}.\]

\begin{remark}
To build intuition for our definition of $\barc{f}$, the reader may find it helpful to consider the definition in the generic case that $|{\intervalSub{\birth}{\wildcard}}_M|\leq 1$ and $|{\intervalSub{\wildcard}{\death}}_N|\leq 1$ for all $\birth,\death\in \E$.  In this case, the definition becomes especially simple: for each interval $I=\langle \birth, \death\rangle \in \B{\im{f}}$, there is a unique interval $I_M=\langle \birth, \death'\rangle \in \B{M}$ and a unique interval $I_N=\langle \birth', \death\rangle \in \B{N}$.  We have $\barc{f}=\{(I_M,I_N)\mid I\in \B{\im f}\}$.
\end{remark}

\begin{figure}[hbt]
\centering{\includegraphics[scale=1.]{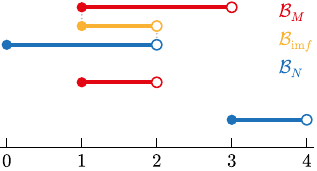}}
{\caption{Barcodes of $M$, $\im f$, and $N$, as in \cref{Ex:barcodes}. Matched intervals are grouped together.}
\label{Fig:barcode-mike}}
\end{figure}
\begin{example}
\label{Ex:barcodes}
Let $M=C[1,2)\oplus C[1,3)$, $N=C[0,2)\oplus C[3,4)$.  Let $f:M\to N$ be a morphism which maps the summand $C[1,2)$ injectively into the summand $C[0,2)$ and maps the summand $C[1,3)$ to 0.  Then $\im{f}\cong C[1,2)$.  The barcodes $\B{M}$, $\B{\im f}$, and $\B{N}$ are plotted in \cref{Fig:barcode-mike}.  We have $\barc{f}=\{([1,3),[0,2))\}$.
\end{example}

\subsection{Properties of Induced Matchings}
Several key facts about the induced matchings $\barc{f}$ follow almost immediately from the definition.  We record them here.

\begin{proposition}\label{Prop:IMT_Part_1}
Let $f:M\to N$ be a morphism of \pfd persistence modules and suppose $\barc{f}\interval{ \birth}{\death }=\interval{ \birth'}{\death' }$.  Then \[\birth' \leq \birth<\death'\leq \death.\]
\end{proposition}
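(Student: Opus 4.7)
The plan is to unwind the definition of $\barc{f}$ in terms of the factorization $M \stackrel{q_f}\twoheadrightarrow \im f \stackrel{j_f}\hookrightarrow N$ and apply the two parts of the structure theorem (\cref{Prop:StructPropForQuotientsAndSubmodules}) in turn. Since $\barc{f} = \barc{j_f} \circ \barc{q_f}$, any pair matched by $\barc{f}$ arises by first tracing through $\B{\im f}$ via the epimorphism side and then through $\B{N}$ via the monomorphism side, and at each step one endpoint is preserved while the other can only move in a prescribed direction.

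More concretely, suppose $\barc{f}\interval{b}{d} = \interval{b'}{d'}$, so there exists some interval $\interval{\tilde b}{\tilde d} \in \B{\im f}$ such that
\[
\barc{q_f}\interval{b}{d} = \interval{\tilde b}{\tilde d}, \qquad \barc{j_f}\interval{\tilde b}{\tilde d} = \interval{b'}{d'}.
\]
First I apply \cref{Prop:StructPropForQuotientsAndSubmodules}\,(ii) to the epimorphism $q_f$: its induced matching is the reverse of the canonical injection $\B{M} \hookleftarrow \B{\im f}$, which matches intervals with equal left endpoints and sends each $\interval{\tilde b}{\tilde d}\in \B{\im f}$ to some $\interval{\tilde b}{d}\in\B M$ with $\tilde d \leq d$. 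Consequently $\tilde b = b$ and $\tilde d \leq d$. Next I apply \cref{Prop:StructPropForQuotientsAndSubmodules}\,(i) to the monomorphism $j_f$: the canonical injection $\B{\im f}\hookrightarrow \B{N}$ matches intervals with equal right endpoints, sending $\interval{b}{\tilde d}$ to some $\interval{b'}{\tilde d}\in \B{N}$ with $b' \leq b$. Thus $d' = \tilde d$ and $b' \leq b$, which yields $b' \leq b$ and $d' = \tilde d \leq d$.

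The only remaining requirement is the strict inequality $b < d'$. This is automatic: since $\interval{b}{\tilde d}$ is an element of $\B{\im f}$, it is a genuine interval in $\R$, so by the decoration conventions of \cref{Sec:Preliminaries} we have $b < \tilde d = d'$. Assembling the three inequalities gives $b' \leq b < d' \leq d$, as required.

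I do not foresee a real obstacle here; the proposition is essentially a bookkeeping consequence of the definition of $\barc{f}$ together with the structure theorem. The only point that requires any care is remembering that the endpoint-preservation property in \cref{Prop:StructPropForQuotientsAndSubmodules} goes in opposite directions on the two sides (left endpoint on the epi side, right endpoint on the mono side), so the composed matching inherits one weak inequality from each side, with the strict middle inequality coming for free from the validity of $\interval{b}{\tilde d}$ as an interval.
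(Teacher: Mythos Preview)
Your proof is correct and follows essentially the same approach as the paper: factor through $\im f$, apply \cref{Prop:StructPropForQuotientsAndSubmodules}\,(ii) to $q_f$ and \cref{Prop:StructPropForQuotientsAndSubmodules}\,(i) to $j_f$, and obtain the strict middle inequality from the fact that $\interval{b}{d'}$ is a genuine interval. The only cosmetic difference is that the paper skips the temporary notation $\tilde b,\tilde d$ and writes the intermediate interval directly as $\interval{b}{d'}$.
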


\begin{figure}[hbt]
\centerline{\includegraphics[scale=1.]{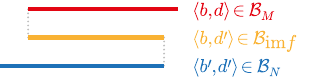}}
{\caption{A pair of intervals matched by $\protect\barc{f}$, together with the corresponding interval in $\protect\B{\im(f)}$.}
\label{Fig:barcode-mike}}
\end{figure}

\begin{proof}
By the definition of $\barc{f}$, we have $\barc{q_f}\interval{ \birth}{\death }=\interval{ \birth}{\death'}$ and $\barc{j_f}\interval{ \birth}{\death'}=\interval{ \birth'}{\death' }$.  By \cref{Prop:StructPropForQuotientsAndSubmodules},
\[\birth' \leq \birth <\death'\leq \death,\]
where the middle inequality holds because $\interval{\birth}{\death'}$ is an interval.
\end{proof}

Our induced matching theorem, presented in \cref{Sec:TheoremStatements}, is a refinement of \cref{Prop:IMT_Part_1}.

\begin{proposition}\label{Prop:ImageBarcodeFromInducedMatching}
For any morphism $f:M\to N$ of \pfd persistence modules, $\barc{f}$ is completely determined by $\B M$, $\B N$, and $\B{\im{f}}$.  Conversely, $\barc{f}$ completely determines these three barcodes.  
\end{proposition}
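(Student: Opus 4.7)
The first half is essentially a definition chase. By construction, $\barc{f} = \barc{j_f} \circ \barc{q_f}$, where $M \twoheadrightarrow \im f \hookrightarrow N$ is the canonical epi--mono factorization, and each of $\barc{j_f}$, $\barc{q_f}$ is (the reverse of) a canonical injection of barcodes as defined in \cref{Sec:ProofOfStructureTheorem}. The definition of a canonical injection between barcodes only uses the endpoint-grouped enumerated sets ${\intervalSub{\wildcard}{\death}}$ and ${\intervalSub{\birth}{\wildcard}}$ of the source and target barcodes, ordered by decreasing length. Hence $\barc{q_f}$ is a function of the pair $(\B{M}, \B{\im f})$ alone, $\barc{j_f}$ is a function of $(\B{\im f}, \B{N})$ alone, and $\barc{f}$ is determined by the triple $(\B M, \B N, \B{\im f})$.

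For the converse, the source $\B{M}$ and target $\B{N}$ are already recoverable from $\barc{f}$, viewed as a morphism in $\MatchingCat$, so the real work is to extract $\B{\im f}$. The plan is to exhibit an explicit bijection $\Phi : \B{\im f} \to \barc{f}$. Let $\psi : \B{\im f} \hookrightarrow \B{M}$ be the canonical injection associated by \cref{Prop:StructPropForQuotientsAndSubmodules}\,(ii) to the epimorphism $q_f$, and let $\varphi : \B{\im f} \hookrightarrow \B{N}$ be the canonical injection associated by \cref{Prop:StructPropForQuotientsAndSubmodules}\,(i) to the monomorphism $j_f$; by the structure theorem, $\psi$ preserves left endpoints and $\varphi$ preserves right endpoints. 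Set $\Phi(K) = (\psi(K), \varphi(K))$. By the definition of composition of matchings, $\Phi$ surjects onto $\barc{f}$, and $\Phi$ is injective because $\psi$ and $\varphi$ are.

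To read $\B{\im f}$ off $\barc{f}$, invert $\Phi$: for each matched pair $(I, J) \in \barc{f}$ with $I = \interval{\birth}{\death} \in \B{M}$ and $J = \interval{\birth'}{\death'} \in \B{N}$, \cref{Prop:IMT_Part_1} gives $\birth < \death'$, so $\interval{\birth}{\death'}$ is a well-defined interval, and it is precisely $\Phi^{-1}(I, J)$ because $\psi$ preserves left endpoints and $\varphi$ preserves right endpoints. Thus $\B{\im f}$ is the multiset $\{\interval{\birth}{\death'} : (I,J) \in \barc{f}\}$. The only bookkeeping subtlety is keeping track of multiplicities via the formal $(K, k)$ representation of multisets, which is immediate from the injectivity of $\psi$ and $\varphi$. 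I expect no substantive obstacle here: the proposition is a direct unwinding of the definition of $\barc{f}$ combined with the endpoint-preservation properties already supplied by \cref{Prop:StructPropForQuotientsAndSubmodules}.
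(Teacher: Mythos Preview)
Your argument is correct and follows essentially the same route as the paper: both set up the obvious bijection between $\barc{f}$ and $\B{\im f}$ coming from the factorization $\barc{f}=\barc{j_f}\circ\barc{q_f}$, and then invert it using \cref{Prop:IMT_Part_1}. The only cosmetic difference is that the paper writes the recovered interval as $I\cap J$ rather than $\interval{\birth}{\death'}$, but these coincide since $\birth'\leq\birth<\death'\leq\death$.
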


\begin{proof}

It is clear from the definition that $\barc{f}$ depends only on $\B M$, $\B N$, and $\B{\im{f}}$.  $\barc{f}$ is a matching between $\B M$ and $\B N$,  so by definition $\barc{f}$ determines $\B M$ and $\B N$.  To prove the converse then, it is enough to check that \[\B{\im{f}}=\{I \cap J \mid (I,J) \in \barc{f}\},\]
where we interpret the right hand side as the representation of a multiset in the obvious way.  By the definition of $\barc{f}$, the map \(\barc{f} \to \B{\im f}\) which sends each matched pair $(I,J)\in \barc{f}$ to $\barc{q_f}(I)\in \B{\im f}$ is a bijection.  To obtain the result, we note that if $\barc{f}\interval{\birth}{\death}=\interval{\birth'}{\death'}$, then \[\barc{q_f}\interval{\birth}{\death}=\interval{\birth}{\death'}=  \interval{\birth}{\death} \cap \interval{\birth'}{\death'},\]
where the second equality follows from \cref{Prop:IMT_Part_1}.
\end{proof}

\paragraph{Induced Matchings and Duality}
We next observe that the matching $\barc{f\dual}$ is, up to indexing considerations, the reverse of the matching $\barc{f}$.  %

For $\mathcal D$ a barcode, recall our definition of $\mathcal D\dual$ from \cref{Sec:dualization}.  A matching $\sigma: \mathcal C\matching \mathcal D$ between barcodes canonically induces a matching $\sigma':\mathcal C\dual\matching \mathcal D\dual$, and hence a matching $\sigma\dual:\mathcal D\dual\matching \mathcal C\dual$ obtained simply by reversing $\sigma'$.  With these definitions, $(\wildcard)\dual$ is a contravariant endofunctor on the full subcategory of $\MatchingCat$ whose objects are barcodes.

The following is a counterpart of \cref{Prop:BarcodesUnderDuality} for induced matchings:

\begin{proposition}\label{Prop:InducedMatchingsUnderDuality}
If $f:M\to N$ is a morphism of \pfd persistence modules, then $\barc{f\dual}=\barc{f}\dual$.
\end{proposition}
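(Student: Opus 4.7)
My plan is to reduce the proposition to the assertion that duality intertwines $\barc{\cdot}$ with reversal on monomorphisms and on epimorphisms separately, and then combine these two cases via the canonical epi-mono factorization.

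First I would observe that the canonical factorization $f = j_f \circ q_f$ dualizes to $f\dual = (q_f)\dual \circ (j_f)\dual$, where $(j_f)\dual \colon N\dual \twoheadrightarrow (\im f)\dual$ is an epimorphism and $(q_f)\dual \colon (\im f)\dual \hookrightarrow M\dual$ is a monomorphism. Under the canonical identification $\im(f\dual) = (\im f)\dual$ we therefore have $q_{f\dual} = (j_f)\dual$ and $j_{f\dual} = (q_f)\dual$, so $\barc{f\dual} = \barc{(q_f)\dual} \circ \barc{(j_f)\dual}$. On the other hand, since $(\cdot)\dual$ is contravariantly functorial on matchings, $\barc{f}\dual = \barc{q_f}\dual \circ \barc{j_f}\dual$. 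It therefore suffices to establish the two identities $\barc{j\dual} = \barc{j}\dual$ for every monomorphism $j$ of \pfd modules, and $\barc{q\dual} = \barc{q}\dual$ for every epimorphism $q$; the second follows from the first applied to $j = q\dual$, using $q\doubledual = q$.

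To prove the monomorphism case, fix $j\colon M \hookrightarrow N$; both $\barc{j\dual}$ and $\barc{j}\dual$ are matchings $\B{N\dual} \matching \B{M\dual}$. The matching $\barc{j}\dual$ is the canonical injection $\B M \hookrightarrow \B N$ of \cref{Prop:StructPropForQuotientsAndSubmodules}(i), transported along the identification $\B{M}\dual = \B{M\dual}$ of \cref{Prop:BarcodesUnderDuality} and then reversed. The matching $\barc{j\dual}$ is the reverse of the canonical injection $\B{M\dual} \hookrightarrow \B{N\dual}$ produced by \cref{Prop:StructPropForQuotientsAndSubmodules}(ii) applied to the epimorphism $j\dual$. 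The required equality therefore reduces to the statement that for each $\death \in \E$ the endpoint-negation bijection $I \mapsto -I$ carries the enumerated set $\intervalSub{\wildcard}{\death}_M$ order-isomorphically onto the corresponding enumerated set of intervals in $\B{M\dual}$ sharing left endpoint obtained by negating $\death$, and likewise for $N$. Since both enumerations are by reverse inclusion and $I \mapsto -I$ preserves inclusions, this is immediate; it is in fact precisely the compatibility tacitly invoked in the proof of \cref{Prop:StructPropForQuotientsAndSubmodules}(ii).

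Assembling the pieces yields
\[ \barc{f\dual} = \barc{(q_f)\dual} \circ \barc{(j_f)\dual} = \barc{q_f}\dual \circ \barc{j_f}\dual = (\barc{j_f} \circ \barc{q_f})\dual = \barc{f}\dual. \]
The main obstacle is entirely notational: keeping endpoint decorations, the orderings on the enumerated subsets $\intervalSub{\wildcard}{\death}_M$ and $\intervalSub{\birth}{\wildcard}_M$, and the direction-reversing behavior of matching duality all aligned at once. Once the conventions are fixed, the proof is a direct consequence of \cref{Prop:StructPropForQuotientsAndSubmodules} together with contravariant functoriality.
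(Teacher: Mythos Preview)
Your proof is correct and follows essentially the same route as the paper: factor $f\dual$ as the mono $(q_f)\dual$ composed with the epi $(j_f)\dual$, reduce to the claim $\barc{g\dual}=\barc{g}\dual$ for $g$ a mono or epi, and then assemble using contravariance of $(\cdot)\dual$ on matchings. The only difference is that where the paper says this last claim ``is easy to check,'' you spell out the argument via the order-preserving bijection $I\mapsto -I$ between the enumerated sets $\intervalSub{\wildcard}{\death}_M$ and their counterparts in $\B{M\dual}$.
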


\begin{proof}
If $q$ is an epimorphism of persistence modules and $j$ is a monomorphism of persistence modules, then $q\dual$ is a monomorphism of persistence modules and $j\dual$ is an epimorphism of persistence modules.
Thus, for $f=j_f\circ q_f$ we have
\[\barc{f\dual}=\barc{(j_f\circ q_f)\dual}=\barc{q_f\dual \circ j_f\dual}=\barc{q_f\dual} \circ \barc{j_f\dual}.\]
It is easy to check that if $g$ is an epimorphism or a monomorphism, then $\barc{g}\dual=\barc{g\dual}$.  Therefore,
\[\barc{q_f\dual} \circ \barc{j_f\dual}=\barc{q_f}\dual \circ \barc{j_f}\dual=(\barc{ j_f}\circ \barc{q_f })\dual=\barc{f}\dual.\]
We thus have $\barc{f\dual}=\barc{f}\dual$ as desired.  
\end{proof}

\subsection{Partial Functoriality of Induced Matchings}\label{Sec:LimitedFunctoriality}
Let \[\B{}:\obj(\kvect^\RCat)\to \obj(\MatchingCat)\] denote the map sending a \pfd persistence module $M$ to its barcode $\B{M}$.
Given that \[\barc{}:\hom(\kvect^\RCat)\to \hom(\MatchingCat)\] maps each morphism $f:M\to N$ of persistence modules to a matching between $\B M$ and $\B N$, one might hope that the pair $(\B{},\barc{})$ defines a functor $\kvect\to \MatchingCat$.  However, as the next example shows, $\barc{}$ does not respect composition of morphisms, so is not functorial.
\begin{example}
Consider persistence modules 
\begin{align*}
L&=C[3,\infty)\oplus C[4,\infty),\\
M&=C[2,\infty)\oplus C[1,\infty),\\
N&=C[0,\infty).
\end{align*}
Define $f:L\to M$ by $f(s,t)=(s,0)$; define $g:M\to N$ by $g(s,t)=t$.  We have $\im f\cong C[3,\infty)$ and $\im g\cong C[1,\infty)$.

$\barc{f}$ matches the interval $[3,\infty) \in \B{L}$ to $[1,\infty) \in \B{M}$, and leaves $[4,\infty) \in \B{L}$ and $[2,\infty) \in \B{M}$ unmatched.  $\barc{g}$ matches $[1,\infty) \in \B{M}$ to $[0,\infty) \in \B{N}$ and leaves $[2,\infty) \in \B{M}$ unmatched.  Thus  $\barc{g}\circ \barc{f}$ matches $[3,\infty) \in \B{L}$ to $[0,\infty) \in \B{N}$ and leaves $[4,\infty) \in \B{L}$ unmatched.  
On the other hand $g\circ f=0$, so $\barc{g\circ f}$ is the trivial matching (i.e., $\barc{g\circ f}=\emptyset$).
\end{example}

Though $\barc{}$ is not functorial in general, we have the following partial functoriality result for $\barc{}$:
\begin{proposition}\label{Prop:LimitedFunctoriality}
$\barc{}$ is functorial on the two subcategories of $\kvect^\RCat$ whose morphisms are, respectively, the monomorphism and epimorphisms.
\end{proposition}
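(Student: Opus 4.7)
The plan is to verify functoriality on each of the two subcategories separately, reducing each case to the elementary functoriality of canonical injections between enumerated sets recorded in \cref{Lem:Functoriality}. Since $\id_M$ is simultaneously a monomorphism and an epimorphism and $\Inj{S}{S}$ is the identity on any enumerated set $S$, both clauses of the definition of $\barc{(\wildcard)}$ give $\barc{\id_M}=\id_{\B{M}}$, so identity morphisms pose no difficulty, and we only need to check that compositions are respected.

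For the monomorphism case, let $j_1\colon L\hookrightarrow M$ and $j_2\colon M\hookrightarrow N$ be composable monomorphisms, so that $j_2\circ j_1\colon L\hookrightarrow N$ is again a monomorphism. Applying \cref{Prop:StructPropForQuotientsAndSubmodules}(i) to each of $j_1$, $j_2$, and $j_2\circ j_1$ yields, for every $\death\in\E$, the chain of inequalities $|{\intervalSub{\wildcard}{\death}}_L|\leq|{\intervalSub{\wildcard}{\death}}_M|\leq|{\intervalSub{\wildcard}{\death}}_N|$. By definition, the matchings $\barc{j_1}$, $\barc{j_2}$, and $\barc{j_2\circ j_1}$ restrict on each of these enumerated sets to the canonical injections $\Inj{{\intervalSub{\wildcard}{\death}}_M}{{\intervalSub{\wildcard}{\death}}_L}$, $\Inj{{\intervalSub{\wildcard}{\death}}_N}{{\intervalSub{\wildcard}{\death}}_M}$, and $\Inj{{\intervalSub{\wildcard}{\death}}_N}{{\intervalSub{\wildcard}{\death}}_L}$, respectively. \cref{Lem:Functoriality} records exactly the composition identity for these canonical injections, and assembling over all $\death\in\E$ then yields $\barc{j_2\circ j_1}=\barc{j_2}\circ\barc{j_1}$.

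For the epimorphism case, the cleanest route is to deduce it from the monomorphism case by duality. Given composable epimorphisms $q_1,q_2$, the duals $q_2\dual,q_1\dual$ are composable monomorphisms, so the monomorphism case gives $\barc{q_1\dual\circ q_2\dual}=\barc{q_1\dual}\circ\barc{q_2\dual}$. Combining this with $(q_2\circ q_1)\dual=q_1\dual\circ q_2\dual$, \cref{Prop:InducedMatchingsUnderDuality}, and the contravariance of $(\wildcard)\dual$ on $\MatchingCat$, one chains
\[\barc{q_2\circ q_1}\dual=\barc{(q_2\circ q_1)\dual}=\barc{q_1\dual\circ q_2\dual}=\barc{q_1\dual}\circ\barc{q_2\dual}=\barc{q_1}\dual\circ\barc{q_2}\dual=(\barc{q_2}\circ\barc{q_1})\dual,\]
and applying $(\wildcard)\dual$ once more---noting that $(\wildcard)\doubledual$ acts as the identity on barcodes of \pfd modules and on their matchings---delivers the desired $\barc{q_2\circ q_1}=\barc{q_2}\circ\barc{q_1}$. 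Alternatively one could repeat the monomorphism argument verbatim, using ${\intervalSub{\birth}{\wildcard}}_{(-)}$ in place of ${\intervalSub{\wildcard}{\death}}_{(-)}$ and invoking \cref{Prop:StructPropForQuotientsAndSubmodules}(ii); the only bookkeeping subtlety is that the canonical injections of that case point opposite to the matchings $\barc{q}$, so one must confirm that reversal distributes over composition in $\MatchingCat$, which is immediate from the relational definition of composition. The entire proof is definition-chasing on top of the structure theorem of Section~4, and the only point meriting a moment's care is this orientation issue in the epimorphism case.
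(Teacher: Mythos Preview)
Your proof is correct and essentially matches the paper's. The monomorphism case is identical: decompose $\barc{j}$ as a disjoint union of canonical injections ${\intervalSub{\wildcard}{\death}}_{(-)}\hookrightarrow{\intervalSub{\wildcard}{\death}}_{(-)}$ and invoke \cref{Lem:Functoriality}. For the epimorphism case, the paper simply says ``essentially the same argument, together with the fact that the operation of reversing a matching is functorial,'' which is exactly your second alternative; your primary route via \cref{Prop:InducedMatchingsUnderDuality} is a perfectly valid (and logically independent) shortcut, since that proposition's proof does not rely on the present one.
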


\begin{proof}
It is clear from the definition that $\barc{}$ sends identity morphisms to identity morphisms.  If $j:M\to N$ is a monomorphism of \pfd persistence modules, then by definition $\barc{j}$ factors as the disjoint union of the canonical injections ${\intervalSub{\wildcard}{\death}}_M\hookrightarrow {\intervalSub{\wildcard}{\death}}_N$ indexed by endpoints $\death\in \E$.  It thus follows by \cref{Lem:Functoriality} that if $j_1:L\to M$ and $j_2:M\to N$ are monomorphisms of \pfd persistence modules, then $\barc{j_2}\circ \barc{j_1}=\barc {j_2\circ j_1}.$
Thus, $\barc{}$ is functorial on the subcategory of monomorphisms in $\kvect^\RCat$.  

Using essentially the same argument, together with the fact that the operation of reversing a matching is functorial,
we find that, dually, $\barc{}$ is functorial on the subcategory of epimorphisms in $\kvect^\RCat$.
\end{proof}

\begin{example}
Given \pfd persistence modules $M,N,P,Q$ and morphisms $f:M\to N$, $g:P\to Q$ we can define the direct sum $f\oplus g:M\oplus P\to N\oplus Q$ in the obvious way.  In view of \cref{Prop:LimitedFunctoriality}, one might hope that when $f$ and $g$ are monomorphisms, $\barc{f\oplus g}=\barc{f}\coprod \barc{g}$.  However, this equality does not hold in general.  For example, we can take $M=N=C[1,2)$, $P=0$, and $Q=C[0,2)$, with $f:M\to N$ the identity map.  Then $\barc{f}\coprod \barc{g}=\{([1,2),[1,2))\}$ but $\barc{f\oplus g}=\{([1,2),[0,2))\}.$

\end{example}

\newcommand\barcodeFunctor{S}

\paragraph{Non-Functoriality of Persistence Barcodes}
It is natural to ask whether there exists some other, fully functorial definition of a map sending each morphism $M \to N$ of \pfd persistence modules to a matching $\B{M}\matching\B{N}$.  \cref{Prop:NoFunctoriality} below makes clear that the answer is no.

\begin{lemma}\label{Lem:VectorSpaceNonFunctoriality}
There exists no functor $\kvect\to \MatchingCat$ sending each vector space of dimension~$d$ to a set of cardinality~$d$.
\end{lemma}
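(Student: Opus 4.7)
I would argue by contradiction: suppose a functor $F:\kvect\to\MatchingCat$ exists with $|F(V)|=\dim V$ for every $V$. My first move would be to pin down $F$ on the zero morphism $0_{K,K}:K\to K$. Since $\dim 0 = 0$ we have $F(0) = \emptyset$ as a set, and every matching between a singleton and $\emptyset$ is itself empty. The zero morphism $0_{K,K}$ factors through the zero vector space, so by functoriality $F(0_{K,K})$ is a composition of two empty matchings, hence empty.

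Next, I would exploit the fact that $K^2$ has at least three distinct one-dimensional subspaces, namely the two coordinate axes $L_1, L_2$ and the diagonal $L_3 = \mathrm{span}(1,1)$, which are always distinct regardless of the field $K$. For each $k \in \{1,2,3\}$ I would fix an injection $f_k:K \hookrightarrow K^2$ with image $L_k$; each is a split monomorphism, so it admits a retraction $r_k:K^2 \to K$ with $r_k \circ f_k = \id_K$. Applying $F$, the identity $F(r_k)\circ F(f_k) = \id_{F(K)}$ is nonempty, which forces $F(f_k):\{*\}\matching F(K^2)$ to be nonempty as well; writing $F(K^2) = \{a,b\}$, the element $F(f_k)(*) \in \{a,b\}$ is thus well defined for each~$k$.

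The main step is a pairwise comparison of the three values $F(f_k)(*)$. For any distinct indices $k, k'$, the direct sum decomposition $K^2 = L_k \oplus L_{k'}$ provides a projection $p:K^2 \to K$ with $p \circ f_k = \id_K$ and $p \circ f_{k'} = 0$. Applying $F$ to the first identity forces $F(p)(F(f_k)(*)) = *$, so $F(f_k)(*) \in \dom F(p)$. Applying $F$ to the second gives $F(p) \circ F(f_{k'}) = F(0_{K,K}) = \emptyset$, and since $F(p)$ is a partial bijection, this forces $F(f_{k'})(*) \notin \dom F(p)$. Therefore $F(f_k)(*) \ne F(f_{k'})(*)$ for every $k \ne k'$, so the three values are pairwise distinct elements of the two-element set $\{a,b\}$, a contradiction.

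The main obstacle, modest as it is, is the opening step of forcing $F(0_{K,K}) = \emptyset$: without it the matchings $F(f_k)$ could all collapse onto a common nonempty element in a functorially consistent way, and the pigeonhole contradiction would evaporate. Once the zero-object factorization has pinned down $F$ on the zero morphism, the three-lines argument is essentially a one-step calculation.
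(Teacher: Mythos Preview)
Your proof is correct and follows essentially the same three-lines-in-$K^2$ pigeonhole argument as the paper. The one noteworthy difference is in the preliminary step: the paper first proves the general fact that a linear map of rank $r$ is sent to a matching of cardinality $r$ (treating injections, surjections, and then arbitrary maps via the image factorization), whereas you bypass this entirely and only establish the special case $F(0_{K,K})=\emptyset$ via the factorization through the zero object. Your route is a genuine streamlining, since the full rank statement is never used in the contradiction; your symmetric phrasing (for each pair of lines, build the projection from the direct-sum decomposition) is also a little cleaner than the paper's explicit choice of $p_1,p_2,p_3$. Conversely, the paper's rank lemma is of some independent interest and makes the argument feel less ad hoc. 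Either way, the heart of the proof is identical.
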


\begin{proof}
Assume for a contradiction that such a functor $\barcodeFunctor$ exists.
We first show that $\barcodeFunctor$ must send each linear map $f:V\to W$ of rank $r$ to a matching $\barcodeFunctor(f)$ of cardinality $|\barcodeFunctor(f)|=r$.  We begin with the case of $f$ an injection.  In this case, $f$ has a left inverse $g:W\to V$.  We have  \[\barcodeFunctor(g)\circ \barcodeFunctor(f)=\barcodeFunctor(g\circ f)=\barcodeFunctor(\id_V),\] so \[r=|\barcodeFunctor(\id_V)|=|\barcodeFunctor(g)\circ \barcodeFunctor(f)|\leq |\barcodeFunctor(f)| \leq \dim V = r,\]
which gives the result.  By an analogous argument, the result also holds when $f$ is a surjection.  

For the general case, consider $f$ as a composition of maps $V\stackrel{q_f}\twoheadrightarrow\im f\stackrel{j_f}\hookrightarrow W$.  Since $\rank q_f = \rank j_f = \rank f = r$, we have $|\barcodeFunctor(q_f)|=|\barcodeFunctor(j_f)| = r$. Since $|\barcodeFunctor(\im f)|=r$ as well, this implies that both $\barcodeFunctor(q_f)$ and $\barcodeFunctor(j_f)$ match every element of $\barcodeFunctor(\im f)$.
It follows that $|\barcodeFunctor(f)|=r$.

Now let $i_1,i_2,i_3:\K\to\K^2$ be injective linear maps with left inverses $p_1,p_2,p_3:\K^2\to\K$, respectively, such that 
$p_1 \circ i_2 = p_2 \circ i_1 = 0$ and $p_1 \circ i_3 = p_2 \circ i_3 = \id_K$.
For instance,%
\begin{align*}
i_1&: x \mapsto (x,0), & i_2&: x \mapsto (0,x), &i_3&: x \mapsto (x,x), \\
p_1&: (x,y) \mapsto x, & p_2&: (x,y) \mapsto y, &p_3&: (x,y) \mapsto x.
\end{align*}
Each of these six maps is of rank 1 and thus matches exactly one element of $\barcodeFunctor(K^2)$.  Write $\barcodeFunctor(\K^2)=\{a,b\}$ and assume without loss of generality that $\barcodeFunctor(i_1)$ matches $a$.  Since $\barcodeFunctor(p_1) \circ \barcodeFunctor(i_1) = \barcodeFunctor(p_1 \circ i_1) = {\barcodeFunctor(\id_K)} = \id_{\barcodeFunctor(K)}$, $\barcodeFunctor(p_1)$ must also match $a$.   Moreover, $\barcodeFunctor(i_2)$ must match $b$; otherwise, $\barcodeFunctor(i_2)$ would have to match $a$, and we would have $\barcodeFunctor(p_1) \circ \barcodeFunctor(i_2) \neq\emptyset$ and $\barcodeFunctor(p_1 \circ i_2)=\barcodeFunctor(0)=\emptyset$, violating the functoriality of $\barcodeFunctor$.  By an analogous argument, $\barcodeFunctor(p_2)$ must match $b$ as well.

If $i_3$~matches~$a$, then $\barcodeFunctor(p_2) \circ \barcodeFunctor(i_3) = \emptyset$. But $\barcodeFunctor(p_2 \circ i_3) = \barcodeFunctor(\id_K) \neq \emptyset$, violating functoriality.  On the other hand, if $i_3$~matches~$b$, then $\barcodeFunctor(p_1) \circ \barcodeFunctor(i_3) = \emptyset$ but $ \barcodeFunctor(p_1 \circ i_3) \ne \emptyset$, again violating functoriality.  We conclude that the functor $\barcodeFunctor$ cannot exist.
\end{proof}

\begin{proposition}\label{Prop:NoFunctoriality}
There exists no functor $\kvect^\RCat \to \MatchingCat$ sending each persistence module to its barcode.
\end{proposition}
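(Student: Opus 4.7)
The plan is to deduce this from \cref{Lem:VectorSpaceNonFunctoriality} by embedding $\kvect$ into $\kvect^\RCat$ in a way that makes barcodes correspond to sets whose cardinalities are vector space dimensions. Assume for contradiction that such a functor $F:\kvect^\RCat \to \MatchingCat$ exists. I will construct a functor $E:\kvect \to \kvect^\RCat$ such that every object in the image of $E$ has a barcode whose underlying set has cardinality equal to the dimension of the corresponding vector space; then $F \circ E$ will contradict \cref{Lem:VectorSpaceNonFunctoriality}.

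Concretely, I would define $E$ as follows. Fix a single interval, say $I_0 = [0,1)$. For a vector space $V$ with an ordered basis of size $d = \dim V$, set $E(V) = \bigoplus_{i=1}^d C(I_0)$, which is a \pfd persistence module whose barcode consists of exactly $d$ copies of $I_0$ (so $|\B{E(V)}| = d$). For a linear map $f:V\to W$, build $E(f):E(V)\to E(W)$ pointwise by letting $E(f)_t$ be $f$ under the canonical identifications $E(V)_t \cong V$ and $E(W)_t \cong W$ for $t\in I_0$, and the zero map for $t\notin I_0$. The commuting squares for the transition maps hold trivially because the transitions in $E(V)$ and $E(W)$ are either identities (on $I_0$) or zero. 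Functoriality of $E$ is immediate: $E(\id_V) = \id_{E(V)}$ and $E(g\circ f) = E(g)\circ E(f)$ from the pointwise definition.

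Now suppose such an $F$ exists. Then $G := F \circ E : \kvect \to \MatchingCat$ is a functor, and by hypothesis $G(V) = \B{E(V)}$, which is a set of cardinality $\dim V$. This contradicts \cref{Lem:VectorSpaceNonFunctoriality}. Hence no such $F$ exists.

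The only point worth verifying carefully is that the modules $E(V)$ are genuine objects of $\kvect^\RCat$ and that morphisms of persistence modules between them correspond to linear maps $V\to W$ in a functorial way; this is essentially routine given that all transition maps on the support $I_0$ are identities, so a natural transformation $E(V)\to E(W)$ is determined by a single linear map $V\to W$. With this embedding in hand, the reduction to the vector space case is immediate and there is no additional obstacle.
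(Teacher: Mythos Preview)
Your argument is correct and follows essentially the same route as the paper: embed $\kvect$ into $\kvect^\RCat$ so that barcodes record dimensions, then compose with the hypothetical functor to contradict \cref{Lem:VectorSpaceNonFunctoriality}. The paper's embedding is slightly cleaner---it sets $P(V)_0=V$ and $P(V)_t=0$ for $t\neq 0$, avoiding the basis choice and the identification $E(V)_t\cong V$---but the idea is the same.
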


\begin{proof}
Assume for a contradiction that there exists a functor
$B:\kvect^\RCat\to \MatchingCat$ sending each persistence module to its barcode.
Consider the functor $P: \kvect \to \kvect^\RCat$ that sends a vector space~$V$ to the persistence module $M$ with $M_0=V$ and $M_t=0$ for $t\neq0$.
The functor $B \circ P:\kvect\to \MatchingCat$  then sends each vector space of dimension $d$ to a barcode of cardinality $d$.  But by \cref{Lem:VectorSpaceNonFunctoriality}, such a functor cannot exist.  We conclude that there cannot exist a functor $\kvect^\RCat\to \MatchingCat$ sending each persistence module to its barcode.
\end{proof}

\section{The Induced Matching Theorem}\label{Sec:TheoremStatements}
We now come to our main result, the induced matching theorem.

For $\epsilon\geq 0$, we say a persistence module $M$ is \emph{$\epsilon$-trivial} if the transition morphism $\varphi_M^\epsilon:M\to M(\epsilon)$ is the zero morphism.  
Recall that in \cref{Sec:BottleneckDistance}, 
for a barcode $\D$ and $\epsilon\geq 0$ we defined 
\[\D^{\epsilon} = \{ \interval{\birth}{\death } \in \D \mid \birth + \epsilon < \death \}=\{I\in \D\mid [t,t+\epsilon]\subseteq I \textup{ for some }t\in \R\}.\]
\begin{theorem}[Induced Matching Theorem]\label{Thm:InducedMatching}
Let $f:M\to N$ be a morphism of pointwise finite dimensional persistence modules and suppose $\barc{f}\interval{ \birth}{\death }=\interval{ \birth'}{\death' }$.  Then
\begin{enumerate}[(i)]
\item $\birth' \leq \birth<\death'\leq \death$.
\item If $\coker f$ is $\epsilon$-trivial, then $\B{N}^\epsilon\subseteq \im \barc{f}$ and 
\[\birth' \leq \birth \leq \birth'+\epsilon.\]
\item Dually, if $\ker f$ is $\epsilon$-trivial, then $\B{M}^\epsilon\subseteq \coim \barc{f}$ and 
\[\death-\epsilon \leq \death'\leq \death.\]
\end{enumerate}
\end{theorem}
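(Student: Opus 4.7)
Part (i) is already \cref{Prop:IMT_Part_1}, and (iii) will follow from (ii) by a duality argument: since $\ker f$ is $\epsilon$-trivial iff $\coker f^\ast$ is, applying (ii) to $f^\ast : N^\ast \to M^\ast$ and translating back via \cref{Prop:BarcodesUnderDuality,Prop:InducedMatchingsUnderDuality} yields exactly (iii) for $f$. So the real work is (ii).

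My plan for (ii) is to reduce everything to a statement about the canonical injection $\iota : \B{\im f} \hookrightarrow \B{N}$ of \cref{Prop:StructPropForQuotientsAndSubmodules}\,(i). Because $\barc{q_f}$ surjects onto $\B{\im f}$, one has $\im \barc{f} = \im \iota$; and by \cref{Prop:IMT_Part_1}, a matched pair $\barc{f}\interval{\birth}{\death} = \interval{\birth'}{\death'}$ factors through an intermediate matching $\iota\interval{\birth}{\death'} = \interval{\birth'}{\death'}$. So (ii) is equivalent to proving (a) $\B{N}^{\epsilon} \subseteq \im \iota$, and (b) whenever $\iota\interval{\birth}{\death} = \interval{\birth'}{\death}$ one has $\birth \leq \birth' + \epsilon$; the bound $\birth' \leq \birth$ is free from the structure theorem.

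The key lemma I will prove is: if $\coker f$ is $\epsilon$-trivial, then for every interval $I = \interval{\birth}{\death}$ with $\birth + \epsilon < \death$, writing $I_\epsilon = \interval{\birth+\epsilon}{\death}$,
\[
|{\intervalSub{\wildcard}{I}}_N| \;\leq\; |{\intervalSub{\wildcard}{I_\epsilon}}_{\im f}|.
\]
Granting this, (a) and (b) both drop out of the fact that $\iota$ matches intervals in ${\intervalSub{\wildcard}{\death}}_{\im f}$ and ${\intervalSub{\wildcard}{\death}}_N$ in order of decreasing length. For (a), apply the inequality with $I = \interval{\birth'}{\death'}$ and chase ranks in ${\intervalSub{\wildcard}{\death'}}_N$, using that $|{\intervalSub{\wildcard}{I_\epsilon}}_{\im f}| \leq |{\intervalSub{\wildcard}{\death'}}_{\im f}|$. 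For (b), if $\birth > \birth' + \epsilon$ then $\interval{\birth}{\death}$ lies outside ${\intervalSub{\wildcard}{I_\epsilon}}_{\im f}$ for $I_\epsilon = \interval{\birth'+\epsilon}{\death}$, contradicting the inequality applied to $I = \interval{\birth'}{\death}$.

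To prove the key lemma I will imitate \cref{Lem:StructPropLemma}. Using \pfd-ness I pick $t \in I$ with also $t - \epsilon \in I$ -- possible precisely because $\birth + \epsilon < \death$ -- and $t$ past every interval $J \in \B{N} \cup \B{\im f}$ having $\birth_J \leq \birth + \epsilon$ and $\death_J < \death$. Then the subspace $V_t \subseteq N_t$ built from $(N, I)$ has dimension $|{\intervalSub{\wildcard}{I}}_N|$ and the analogous $U'_t \subseteq (\im f)_t$ built from $(\im f, I_\epsilon)$ has dimension $|{\intervalSub{\wildcard}{I_\epsilon}}_{\im f}|$, so it suffices to show $V_t \subseteq U'_t$ inside $N_t$. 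First, $\epsilon$-triviality of $\coker f$ forces $\varphi_N(t-\epsilon,t)$ to land in $(\im f)_t$, so $V_t \subseteq (\im f)_t$. Second, for each $s' \in I_\epsilon$ with $s' \leq t$, set $s = s' - \epsilon \in I$; lifting $v \in V_t$ through $\varphi_N(s,t)$ to some $n_s \in N_s$, the element $\varphi_N(s,s')(n_s)$ again lies in $(\im f)_{s'}$ by $\epsilon$-triviality and maps to $v$ under $\varphi_{\im f}(s',t)$, giving the image condition. The kernel condition transfers for free because $I$ and $I_\epsilon$ share a right endpoint. I expect this preimage-shift to be the main obstacle to make rigorous: it is exactly the step that consumes the slack $\birth + \epsilon < \death$, and without that slack $t - \epsilon$ would not lie in $I$ at all.
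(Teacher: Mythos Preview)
Your argument is correct, but it takes a somewhat different route from the paper's. The paper does not prove your counting inequality directly. Instead, it introduces the auxiliary submodule $N^\epsilon \subseteq N$ defined by $N^\epsilon_t = \im \varphi_N(t-\epsilon,t)$, observes that its barcode is $\{\interval{c+\epsilon}{e} \mid \interval{c}{e} \in \B{N}^\epsilon\}$, and notes that $\epsilon$-triviality of $\coker f$ is exactly the condition $N^\epsilon \subseteq \im f$. This gives a chain of monomorphisms $N^\epsilon \hookrightarrow \im f \hookrightarrow N$, and then \cref{Prop:LimitedFunctoriality} (functoriality of $\barc{}$ on monomorphisms) yields a commuting triangle of canonical injections $\barc{j_\epsilon} = \barc{j_f} \circ \barc{j}$. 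Both conclusions of (ii) are then read off directly from this triangle, with no further counting.

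Your key lemma is precisely what one gets by applying \cref{Lem:StructPropLemma} to the inclusion $N^\epsilon \hookrightarrow \im f$ and then translating via the explicit description of $\B{N^\epsilon}$; you have simply inlined that application, redoing the subspace argument with the $\epsilon$-shift built in rather than isolating it in a separate module. The paper's packaging is more modular---once $N^\epsilon$ is named and its barcode computed, nothing beyond the already-proved \cref{Prop:StructPropForQuotientsAndSubmodules} and \cref{Prop:LimitedFunctoriality} is needed, and the rank-chasing you do for (a) and (b) is absorbed into the functoriality statement. Your approach, on the other hand, is more self-contained: it never invokes \cref{Prop:LimitedFunctoriality} and makes the combinatorics explicit, at the cost of repeating the proof of \cref{Lem:StructPropLemma} in a slightly generalized form.
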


\begin{figure}[hbt]
\centerline{\includegraphics[scale=1.]{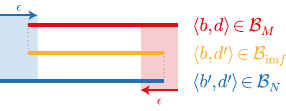}}
{\caption{The relationship between a pair of intervals matched by $\protect\barc{f}$, as described by the induced matching theorem.}

\label{Fig:induced-matching-close-eps}}
\end{figure}

\begin{proof}[Proof of \cref{Thm:InducedMatching}]
\cref{Thm:InducedMatching}\,(i) is exactly \cref{Prop:IMT_Part_1}.

To prove \cref{Thm:InducedMatching}\,(ii), let  $N^\epsilon$ be the submodule of $N$ given by $N^\epsilon_t=\im \varphi_M(t-\epsilon,t)$ for all $t\in \R$.
Note that \[\B{N^\epsilon}=\{\interval{ c+\epsilon}{e } \mid \interval{ c}{e } \in \B{N}^\epsilon\}.\]  
Put informally, we obtain the barcode $\B{N^\epsilon}$ from $\B{N}$ by first removing the intervals of length less than $\epsilon$ from $\B{N}$, and then moving the left endpoint of each remaining interval to the right by $\epsilon$.
Our proof strategy will be to sandwich $\B{\im f}$ between $\B{N^\epsilon}$ and $\B{N}$ via canonical injections.

Let $j_\epsilon:N^\epsilon \hookrightarrow N$ denote the
inclusion.  We see from the definition of $\barc{j_\epsilon}$ that $\im\barc{j_\epsilon}=\B{N}^\epsilon$, and if $\interval{c}{e} \in \B{N}^\epsilon$, then 
\[\barc{j_\epsilon}\interval{c+\epsilon}{e}=\interval{c}{e}.\]

\begin{figure}[hbt]
\centerline{\includegraphics[scale=1.]{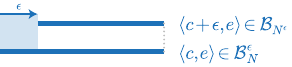}}
{\caption{The relationship between a pair of intervals matched by $\protect\barc{j_\epsilon}:\protect\B{N^\epsilon} \hookrightarrow \protect\B{N}$.}
\label{Fig:Sandwich}}
\end{figure}

Now assume that $\coker f$ is $\epsilon$-trivial.  Then $N^\epsilon \subseteq \im{f}$.  (To see this, note that for all $t\in \R$, $
\varphi_{\coker f}(t-\epsilon,t)=0$ if and only if $\im \varphi_{N}(t-\epsilon,t)\subseteq \im f_t$.)  Thus the following diagram commutes, where each map is the inclusion:
\[
\begin{tikzcd}[row sep=3ex, column sep=4.5ex]
&
N 
\\
{N^\epsilon}
\arrow[hook]{ur}{{j_\epsilon}} 
\arrow[hook,swap]{dr}{{j}}
&
\\
&
{\im f} 
\arrow[hook,swap]{uu}{{j_f}} 
\end{tikzcd}
\]
By \cref{Prop:LimitedFunctoriality}, we have \(\barc{j_\epsilon}=\barc{j_f}\circ \barc{j}.\)
Moreover, by definition we have that \(\barc{f}=\barc{j_f}\circ \barc{q_f},\)
so the following diagram of matchings commutes:
\[
\begin{tikzcd}[row sep=3ex, column sep=3ex]
&
\B N 
\\
\B{N^\epsilon}
\arrow[hook,strike thru]{ur}{\barc{j_\epsilon}} 
\arrow[hook,strike thru,swap]{dr}{\barc{j}}
&& 
\B M 
\arrow[strike thru,swap]{ul}{\barc{f}} 
\arrow[two heads,strike thru]{dl}{\barc{q_f}} 
\\
&
\B{\im f} 
\arrow[hook,strike thru,swap]{uu}{\barc{j_f}} 
\end{tikzcd}
\]
By the commutativity of the left triangle, $\B{N}^\epsilon=\im\barc{j_\epsilon}\subseteq\im\barc{j_f}$.  By our definition of induced matchings,
we have that 
$\im\barc{j_f}=\im\barc{f}$,
so 
\[\B{N}^\epsilon\subseteq\im\barc{f}\] as claimed.

\begin{figure}[hbt]
\centerline{\includegraphics[scale=1.]{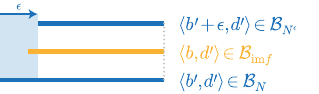}}
{\caption{To show that $b \leq b'+\epsilon$, we sandwich $b$ between $b'$ and $b'+\epsilon$, using that $\protect\barc{j_\epsilon}=\protect\barc{j_f}\circ \protect\barc{j}$.}
\label{Fig:Sandwich}}
\end{figure}

To finish the proof of \cref{Thm:InducedMatching}\,(ii), we need to show that for $b,$ $b'$ as in the statement of the result, $\birth \leq \birth'+\epsilon$.  This follows from the commutativity of the left triangle as well.  To see this, recall that 
$\barc{j_f}\interval{ \birth}{\death' }=\interval{ \birth'}{\death' }$.
If $\interval{ \birth'}{\death' } \in \B{N}^\epsilon$, then we also have that $\barc{j_\epsilon}\interval{\birth'+\epsilon}{\death'}=\interval{\birth'}{\death'}$, so
$\barc{j}\interval{ \birth'+\epsilon}{\death' }=\interval{ \birth}{\death' }$ by commutativity of the triangle; see \cref{Fig:Sandwich}.   
By \cref{Prop:StructPropForQuotientsAndSubmodules}\,(i), we have that \[\birth \leq \birth'+\epsilon.\]
If, on the other hand, $\interval{ \birth'}{\death' } \not\in \B{N}^\epsilon$, then $\death' \leq \birth'+\epsilon$.  Since $\birth< \death'$, we again have that \[\birth \leq \birth'+\epsilon.\]

It remains to prove \cref{Thm:InducedMatching}\,(iii). %
Our proof of \cref{Thm:InducedMatching}\,(ii) dualizes readily.  Alternatively, \cref{Thm:InducedMatching}\,(iii) follows directly from \cref{Thm:InducedMatching}\,(ii) via a straightforward duality argument. 
This latter argument requires the use of \cref{Prop:InducedMatchingsUnderDuality} and \cref{Lem:KernelAndCokernelTriviality,} below, whose proof we omit.  %
\end{proof}

\begin{lemma}\label{Lem:KernelAndCokernelTriviality}
For $f:M\to N$ a morphism of \pfd persistence modules, $\ker f$ is $\epsilon$-trivial if and only if\/ $\coker f\dual$ is $\epsilon$-trivial.  
\end{lemma}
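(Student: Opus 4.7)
The plan is to reduce both sides of the equivalence to pointwise statements about $M_t$, $f_t$, and the transition map $\varphi_M(t,t+\epsilon)$, and then invoke a basic linear-algebraic duality between kernels and images.

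First I would unpack $\epsilon$-triviality of $\ker f$. Since $(\ker f)_t = \ker f_t$ and its transition maps are the restrictions of $\varphi_M(t,t+\epsilon)$, the module $\ker f$ is $\epsilon$-trivial if and only if, for every $t\in\R$,
\[
\varphi_M(t,t+\epsilon)\bigl(\ker f_t\bigr)=0,
\qquad\text{i.e.,}\qquad
\ker f_t\subseteq \ker\varphi_M(t,t+\epsilon).
\]
Next I would unpack $\epsilon$-triviality of $\coker f\dual$. Using the definitions from \cref{Sec:dualization}, $(\coker f\dual)_s = (M_{-s})\dual/\im(f_{-s})\dual$, and the transition from level $s$ to level $s+\epsilon$ is induced by $\varphi_M(-s-\epsilon,-s)\dual$. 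Setting $t=-s-\epsilon$, the $\epsilon$-triviality of $\coker f\dual$ becomes the pointwise condition
\[
\im\varphi_M(t,t+\epsilon)\dual \subseteq \im (f_t)\dual
\qquad\text{for all } t\in\R.
\]

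The remaining step is purely linear-algebraic: for finite-dimensional $V$ and linear maps $g\colon V\to W$, $h\colon V\to U$,
\[
\ker g\subseteq\ker h \iff \im h\dual\subseteq \im g\dual,
\]
which follows from $\im\phi\dual=(\ker\phi)^\perp$ together with the standard fact that $U_1\subseteq U_2$ iff $U_2^\perp\subseteq U_1^\perp$ in a finite-dimensional space. Applying this with $V=M_t$ (which is finite-dimensional since $M$ is \pfd), $g=f_t$, and $h=\varphi_M(t,t+\epsilon)$ converts the condition $\ker f_t\subseteq\ker\varphi_M(t,t+\epsilon)$ into $\im\varphi_M(t,t+\epsilon)\dual\subseteq\im(f_t)\dual$. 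Since both equivalences hold for all $t\in\R$, this yields the desired biconditional.

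The main obstacle I expect is simply bookkeeping: keeping the index reparametrization $t=-s-\epsilon$ straight and confirming that the identifications $M\dual_s=(M_{-s})\dual$ and $\varphi_{M\dual}(s,s+\epsilon)=\varphi_M(-s-\epsilon,-s)\dual$ line up so that the pointwise condition for $\coker f\dual$ matches the dualized pointwise condition for $\ker f$. The use of pointwise finite-dimensionality is essential (and natural, since the statement is about \pfd modules): without it, the equality $(V\dual)\dual=V$ and hence the equivalence between kernel-containment and image-containment of duals may fail.
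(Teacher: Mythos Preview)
Your argument is correct. The paper explicitly omits the proof of this lemma, so there is nothing to compare against; your pointwise reduction together with the standard identity $\im\phi\dual=(\ker\phi)^{\perp}$ and the annihilator-reversal $U_1\subseteq U_2\iff U_2^{\perp}\subseteq U_1^{\perp}$ is exactly the kind of straightforward verification the authors had in mind when they left it to the reader.

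One small remark: your claim that pointwise finite-dimensionality is \emph{essential} is slightly overstated. The identity $\im\phi\dual=(\ker\phi)^{\perp}$ and the equivalence $U_1\subseteq U_2\iff U_2^{\perp}\subseteq U_1^{\perp}$ both hold for arbitrary vector spaces (using a choice of complement to extend functionals), so the linear-algebraic step goes through without the \pfd hypothesis. The hypothesis is there because the lemma is only invoked in the \pfd setting, not because the proof requires it.
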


\subsection{Algebraic Stability via Induced Matchings}\label{Sec:ASP}
We now establish an explicit form of the algebraic stability theorem for pointwise finite dimensional persistence modules as a corollary of the induced matching theorem.  

\begin{lemma}\label{Lem:DeltaTrivialityAndInterleavings}
If $f:M\to \shift N\delta$ is a $\delta$-interleaving morphism, then $\ker{f}$ and $\coker{f}$ are both $2\delta$-trivial.
\end{lemma}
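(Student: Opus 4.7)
The plan is a direct unpacking of the two interleaving identities
\[
\shift g \delta \circ f = \translation M {2\delta}, \qquad \shift f \delta \circ g = \translation N {2\delta},
\]
evaluated pointwise at well-chosen indices. There is no real obstacle here beyond bookkeeping the shifts; the main thing to get right is that $\ker f \subseteq M$ and $\coker f$ is a quotient of $N(\delta)$, so their transition maps are (co)restrictions of the transition maps of $M$ and $N(\delta)$ respectively.

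For the kernel, I would fix $t \in \R$ and $x \in (\ker f)_t \subseteq M_t$, and use that $\varphi_{\ker f}(t, t+2\delta)$ agrees with $\varphi_M(t, t+2\delta)$ on $(\ker f)_t$. Evaluating the first interleaving identity at $t$ gives
\[
\varphi_M(t, t+2\delta)(x) = g_{t+\delta}\bigl(f_t(x)\bigr) = g_{t+\delta}(0) = 0,
\]
so $\varphi_{\ker f}^{2\delta} = 0$.

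For the cokernel, recall that $(\coker f)_t = N_{t+\delta}/\im f_t$ and that $\varphi_{\coker f}(t, t+2\delta)$ is induced by $\varphi_{N(\delta)}(t,t+2\delta) = \varphi_N(t+\delta, t+3\delta)$. Thus $\varphi_{\coker f}^{2\delta}$ is zero iff this latter map lands inside $\im f_{t+2\delta}$ for every $t$. Evaluating the second interleaving identity at $t+\delta$ gives, for every $y \in N_{t+\delta}$,
\[
\varphi_N(t+\delta, t+3\delta)(y) = f_{t+2\delta}\bigl(g_{t+\delta}(y)\bigr) \in \im f_{t+2\delta},
\]
which is exactly what is needed. (Alternatively, one could invoke \cref{Prop:InducedMatchingsUnderDuality} together with \cref{Lem:KernelAndCokernelTriviality} and reduce the cokernel statement to the kernel statement via dualization, but the direct computation is shorter.) This establishes that $\coker f$ is $2\delta$-trivial and completes the argument.
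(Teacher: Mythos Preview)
Your proof is correct and is precisely the unpacking of the paper's one-line proof, which reads in full: ``This follows from the definition of a $\delta$-interleaving.'' Your bookkeeping of the shifts is accurate, and the computation for both kernel and cokernel is the natural way to spell out that sentence.
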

\begin{proof} This follows from the definition of a $\delta$-interleaving.  
\end{proof}
For any persistence module $N$ and $\delta\in [0,\infty)$, we have a bijection \[r_\delta:\B{\shift{N}{\delta}}\to\B{N}\] such that $r_\delta\interval{ \birth}{\death}=\interval{ \birth+\delta}{\death+\delta}$ for each interval $\interval{ \birth}{\death}\in \B{\shift N \delta}$.
\begin{theorem}[Explicit Formulation of Algebraic Stability]\label{Thm:pfdASP} 
If $M, N$ are pointwise finite dimensional persistence modules and $f:M\to \shift N \delta$ is a $\delta$-interleaving morphism, then $r_\delta \circ \barc{f}:\B{M}\matching \B{N}$ is a $\delta$-matching.  
In particular, \[d_B(\B M,\B N)\leq d_I(M,N).\]
\end{theorem}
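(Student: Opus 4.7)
\medskip

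\textbf{Proof proposal.}
The plan is to reduce the theorem directly to the induced matching theorem (\cref{Thm:InducedMatching}) via \cref{Lem:DeltaTrivialityAndInterleavings}, and then bookkeep the effect of the shift $r_\delta$ on endpoints. The point is that the hypothesis that $f:M\to \shift N \delta$ is a $\delta$-interleaving morphism supplies us, at no extra cost, with the $\epsilon$-triviality hypotheses needed to feed into both parts (ii) and (iii) of the induced matching theorem, with $\epsilon = 2\delta$.

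First I would unpack what it means to be a $\delta$-matching between $\B M$ and $\B N$, namely: (1) $\B M^{2\delta}\subseteq \dom(r_\delta\circ\barc f)$; (2) $\B N^{2\delta}\subseteq \im(r_\delta\circ\barc f)$; and (3) for each matched pair $(\interval{b}{d},\interval{b''}{d''})$, the containments $\interval{b}{d}\subseteq\interval{b''-\delta}{d''+\delta}$ and $\interval{b''}{d''}\subseteq\interval{b-\delta}{d+\delta}$ hold. Writing $\barc f\interval{b}{d}=\interval{b'}{d'}\in\B{\shift N \delta}$, we have $(r_\delta\circ\barc f)\interval{b}{d}=\interval{b'+\delta}{d'+\delta}$, so $b''=b'+\delta$ and $d''=d'+\delta$, and conditions (1)--(3) become conditions on the matched pair $(\interval{b}{d},\interval{b'}{d'})$ of $\barc f$ translated by $\delta$.

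Next I would apply \cref{Lem:DeltaTrivialityAndInterleavings} to conclude that $\ker f$ and $\coker f$ are both $2\delta$-trivial. Feeding this into \cref{Thm:InducedMatching}\,(ii)--(iii), we obtain $\B{\shift N \delta}^{2\delta}\subseteq\im\barc f$, $\B M^{2\delta}\subseteq\coim\barc f=\dom\barc f$, and the endpoint bounds $b'\leq b\leq b'+2\delta$ and $d-2\delta\leq d'\leq d$. Since $r_\delta$ is a bijection that preserves interval length, $r_\delta$ sends $\B{\shift N \delta}^{2\delta}$ bijectively onto $\B N^{2\delta}$, which yields (1) and (2) directly. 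For (3), substituting $b''=b'+\delta$, $d''=d'+\delta$ into the inequalities turns the four required endpoint comparisons into the four inequalities $b'\leq b$, $d\leq d'+2\delta$, $b-2\delta\leq b'$, and $d'\leq d$ — exactly what the induced matching theorem supplies.

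The ``in particular'' statement is then immediate: any $\delta$ achievable by an interleaving yields a $\delta$-matching, so $d_B(\B M,\B N)\leq d_I(M,N)$. I do not anticipate a genuine obstacle: the only thing to be careful about is keeping track of the shift, i.e., distinguishing endpoints in $\B{\shift N \delta}$ from endpoints in $\B N$ and verifying that $r_\delta$ converts the $2\delta$-triviality bounds on $\ker f$, $\coker f$ into the symmetric $\delta$-matching bounds between $\B M$ and $\B N$.
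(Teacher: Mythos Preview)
Your proposal is correct and follows essentially the same route as the paper: invoke \cref{Lem:DeltaTrivialityAndInterleavings} to get $2\delta$-triviality of $\ker f$ and $\coker f$, apply the Induced Matching Theorem, and then use the shift $r_\delta$ to convert the resulting bounds into the $\delta$-matching conditions. Your write-up is somewhat more explicit about the endpoint bookkeeping than the paper's, but the argument is identical in substance.
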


\begin{proof}  By \cref{Lem:DeltaTrivialityAndInterleavings}, 
$\ker{f}$ and $\coker{f}$ are both $2\delta$-trivial.
The Induced Matching \cref{Thm:InducedMatching} thus tells us that 
$\B{M}^{2\delta}\subseteq \coim \barc{f}$, 
$\B{N(\delta)}^{2\delta}\subseteq \im \barc{f}$,
and
each matched pair of bars  
$\barc{f}\interval{ \birth}{\death }=\interval{ \birth'}{\death' }$ 
satisfies
\[\birth' \leq \birth \leq \birth'+{2\delta}
\text{\quad and\quad} 
\death-{2\delta} \leq \death'\leq \death.\]
It now follows from the definition of $r_\delta$ that $r_\delta \circ \barc{f}$ is a $\delta$-matching.
\end{proof}

\begin{figure}[hbt]
\centerline{\includegraphics[scale=1.]{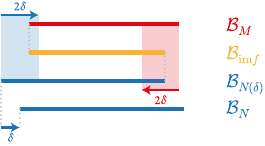}}
{
\caption{
A pair of intervals in the $\delta$-matching $r_\delta \circ \protect\barc{f}:\protect\B{M}\protect\matching \protect\B{N}$ of \cref{Thm:pfdASP}, together with the corresponding intervals in $\protect \B{\im f}$ and $\protect \B{N(\delta)}$.}
\label{Fig:induced-matching-AST}}
\end{figure}

\begin{remark}
For $M$ and $N$ $\delta$-interleaved persistence modules, our proof of the algebraic stability theorem gives a construction of a $\delta$-matching between $\B M$ and $\B N$ depending only on a single $\delta$-interleaving morphism $f:M\to \shift{N}{\delta}$; to construct the $\delta$-matching, one does not need an explicit choice of morphism $g:N\to \shift{M}{\delta}$ such that 
\begin{align*}
\shift g \delta \circ f&=\translation M {2\delta},\\
\shift f \delta \circ g&=\translation N {2\delta}. 
\end{align*}  
This is in contrast to the proof strategy via interpolation used in \cite{chazal2009proximity} and \cite{chazal2012structure}: The construction of a matching given in those proofs depends in an essential way on an explicit choice of both $f$ and $g$.
\end{remark}

\subsection{Single-Morphism Characterization of the Interleaving Relation}
As an another application of \cref{Thm:InducedMatching}, we have the following:

\begin{corollary}\label{OneSidedCharacterization}
Two \pfd persistence modules $M$ and $N$ are $\delta$-interleaved if and only if there exists a morphism $f:M\to N(\delta)$ with $\ker f$ and $\coker f$ both $2\delta$-trivial.
\end{corollary}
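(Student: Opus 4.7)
The proof splits naturally into two directions, one of which is essentially free and one which combines the induced matching theorem with the converse algebraic stability theorem.

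For the forward direction, suppose $M$ and $N$ are $\delta$-interleaved via morphisms $f:M\to N(\delta)$ and $g:N\to M(\delta)$. Then $f$ is by definition a $\delta$-interleaving morphism, and \cref{Lem:DeltaTrivialityAndInterleavings} immediately gives that $\ker f$ and $\coker f$ are both $2\delta$-trivial. No further work is required here.

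For the reverse direction, suppose we are given a single morphism $f:M\to N(\delta)$ with $\ker f$ and $\coker f$ both $2\delta$-trivial. The plan is to feed $f$ into the induced matching machinery to produce a $\delta$-matching between $\B{M}$ and $\B{N}$, and then invoke the converse algebraic stability theorem (stated as the ``if'' direction of \cref{Thm:Isometry}, proved in \cref{Sec:ConverseToASP} following \cite{lesnick2013theory}) to upgrade this $\delta$-matching back to a $\delta$-interleaving between $M$ and $N$. Concretely, I would apply the Induced Matching \cref{Thm:InducedMatching} directly to $f$: the $2\delta$-triviality hypotheses on $\ker f$ and $\coker f$ are exactly what parts (ii) and (iii) of that theorem require, so $\B{M}^{2\delta}\subseteq \coim \barc{f}$, $\B{N(\delta)}^{2\delta}\subseteq \im \barc{f}$, and the endpoints of every matched pair are within $2\delta$ of one another. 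This is the same verification carried out in the proof of \cref{Thm:pfdASP}, and the same reasoning shows that $r_\delta \circ \barc{f}:\B{M}\matching \B{N}$ is a $\delta$-matching.

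Once the $\delta$-matching between $\B{M}$ and $\B{N}$ is in hand, the converse algebraic stability theorem produces $\delta$-interleaving morphisms between $M$ and $N$, completing the proof. The only subtlety worth mentioning is that the argument relies on the converse algebraic stability theorem, which is stated but not yet proved in this excerpt; I would simply cite it. There is no real obstacle beyond assembling these ingredients, since both halves (\cref{Thm:InducedMatching} for the matching direction, and converse algebraic stability for the interleaving direction) have already been established in the paper.
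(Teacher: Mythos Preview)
Your proposal is correct and matches the paper's proof essentially line for line: the forward direction is \cref{Lem:DeltaTrivialityAndInterleavings}, and the converse combines the induced matching theorem (via the argument of \cref{Thm:pfdASP}) with the converse algebraic stability theorem. The only difference is that you spell out the $r_\delta \circ \barc{f}$ step explicitly, whereas the paper simply cites the induced matching theorem and \cref{Thm:ConverseStabilityForDecomposableMods} without elaboration.
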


\begin{proof}
The forward direction is \cref{Lem:DeltaTrivialityAndInterleavings}.  The converse direction follows from the induced matching theorem and the converse algebraic stability theorem~\ref{Thm:ConverseStabilityForDecomposableMods}.
\end{proof}

\begin{remark}
Our definitions of $\delta$-interleavings and $\delta$-trivial persistence modules extend readily to multidimensional persistence modules; see \cite{lesnick2013theory}.  Given this, it is natural to ask whether \cref{OneSidedCharacterization} generalizes to the multidimensional setting.

It is easy to check that the forward direction of the theorem (i.e.,  \cref{Lem:DeltaTrivialityAndInterleavings}) does generalize to multidimensional persistence modules, with the same easy proof.  Conversely, by considering the canonical factorization of a morphism though its image, it is not hard to check that the converse direction of \cref{OneSidedCharacterization} holds for multidimensional persistence modules, up to a factor of 2.  Perhaps surprisingly, the next example shows that 
showing that this factor of 2 is tight.  Thus, while \cref{OneSidedCharacterization} does adapt to the multidimensional setting, the general result is not as strong as the result in one dimension.
\end{remark}

\begin{example}
In this example, we freely use the notation and terminology for multidimensional persistence modules established in \cite{lesnick2013theory}.  First, for $\delta\geq 0$ we define a 2-D persistence module $P$ to be \emph{$\delta$-trivial} if $\varphi_P(\delta):P\to P(\delta)$, as defined in \cite{lesnick2013theory}, is the trivial morphism.  

We will describe 2-D persistence modules $M$ and $N$ and a morphism $f:M\to N(1)$ with 2-trivial kernel and cokernel, such that $M, N$ are  $\delta$-interleaved if and only if $\delta\geq 2$.  

We specify $M$ and $N$ via presentations.  We take \[M\cong \langle (a, (3,2)), (b, (2,3)) \mid x^2y^3 a-x^3y^2b\rangle.\]  That is, $M$ has a generator $a$ at grade $(3,2)$, a generator $b$ at $(2,3)$, and a relation $x^2y^3 a-x^3y^2b$ at grade $(5,5)$.  
We take \[N\cong \langle (c, (2,1)), (d, (1,2)) \mid yc-xd\rangle.\]  %
Thus \[N(1)\cong\langle (c', (1,0)), (d', (0,1)) \mid yc'-xd'\rangle.\]   %
We define $f:M\to N(1)$ by taking $f(a)=x^2y^2c'$, $f(b)=x^2y^2d'$. 
It is easy to check that $f$ has 2-trivial kernel and cokernel, and that $M$ and $N$ are $\delta$-interleaved if and only if $\delta\geq 2$.
\end{example}

\section{Algebraic Stability for q-Tame Persistence Modules}\label{Sec:QTameASP}
In the remaining two sections, we show that the isometry theorem for q-tame persistence modules, as presented in \cite[Theorem 4.11]{chazal2012structure}, follows readily from the isometry theorem for \pfd persistence modules.  

First, we derive the algebraic stability theorem for q-tame persistence modules, as stated in \cite[Theorem 4.20]{chazal2012structure}, from the algebraic stability theorem for \pfd persistence modules, \cref{Thm:pfdASP}.  The last section treats the converse algebraic stability theorem in both the \pfd and q-tame settings.    %

Neither \cite[Theorem 4.20]{chazal2012structure} nor our \cref{Thm:pfdASP} immediately implies the other.  On the one hand, \cite[Theorem 4.20]{chazal2012structure} applies to q-tame persistence modules, whereas \cref{Thm:pfdASP} only applies to \pfd persistence modules.  On the other hand, \cref{Thm:pfdASP} is slightly sharper than the restriction of \cite[Theorem 4.20]{chazal2012structure} to \pfd persistence modules, because \cite[Theorem 4.20]{chazal2012structure} concerns undecorated barcodes (see below), and because as noted in \cref{delta_matching_remark}, the definition of $\delta$-matching used in the present paper is stronger than the one used in \cite{chazal2012structure}. 

\subsection{Undecorated Barcodes of q-Tame Persistence Modules}
Recall %
 that we say a persistence module $M$ is q-tame if $\transition M s t$ has finite rank whenever $s < t$.  A q-tame persistence module does not necessarily decompose into interval summands \cite[Section 4.5]{chazal2012structure}, and thus our definition of barcode does not extend to q-tame persistence modules.

However, it is observed in \cite{chazal2014observable} that if we are willing to use a slightly coarser notion of barcode, then we can define the barcode of a q-tame persistence module $M$ by way of the interval summand decomposition of an approximation of $M$.  We recall the definition here.  See \cite{chazal2014observable} for a thorough study of the definition, including an interpretation in terms of quotient categories.

\paragraph{Undecorated Barcodes}
Adapting a definition of \cite{chazal2012structure}, we define an \emph{undecorated barcode} to be a barcode for which every interval is open.  In what follows, %
we will sometimes refer to barcodes as \emph{decorated barcodes}, to emphasize the distinction between general barcodes and the special case of undecorated barcodes.  

There is an obvious map $\undecorate$ from the set of decorated barcodes to the set of undecorated barcodes, which removes all intervals of the form $[t,t]$ from a barcode and forgets the decorations of the endpoints of all other intervals in the barcode.  For example, %
we have \[\undecorate({\{[0,1),[0,2],(-\infty,\infty),[0,0]\}})=\big\{(0,1),(0,2),(-\infty,\infty)\big\}.\]  
If $M$ is an interval-decomposable persistence module, we define the undecorated barcode $\Bopen M$ of $M$ to be $\undecorate{(\B{M})}$.

\paragraph{Undecorated Barcodes of q-Tame persistence modules}
As in the proof of \cref{Thm:InducedMatching}, for $M$ a persistence module and $\epsilon\geq 0$, let $M^\epsilon$ denote the submodule of $M$ defined by $M^\epsilon_t=\im \varphi_M(t-\epsilon,t)$ for all $t\in \R$. 

For $M$ any persistence module, define $\rad M$, the \emph{radical} of $M$, as \[\rad M=\bigcup_{\epsilon>0}M^\epsilon.\]  Note that $\rad M$ is a submodule of $M$.  It is easy to see that $d_I(M,\rad M)=0$; in this sense, $\rad M$ is an infinitesimally close approximation of $M$.  %

\begin{theorem}[Structure of Radical q-Tame Persistence Modules \protect{\cite[Corollary~3.6]{chazal2014observable}}]\label{Thm:StructForQTameRads}  For $M$ any q-tame persistence module, $\rad M$ is interval decomposable.
\end{theorem}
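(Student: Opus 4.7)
The plan is to build $\rad M$ as a directed colimit of pfd submodules, apply the structure theorem for pfd persistence modules to each, and glue the resulting interval decompositions coherently.  For each $\epsilon > 0$, the submodule $M^\epsilon$ is pfd, since $M^\epsilon_t = \im \varphi_M(t-\epsilon, t)$ is finite-dimensional by q-tameness of $M$; hence each $M^\epsilon$ is interval-decomposable.  The factorization $\varphi_M(t-\epsilon, t) = \varphi_M(t-\epsilon', t) \circ \varphi_M(t-\epsilon, t-\epsilon')$ gives inclusions $M^\epsilon \hookrightarrow M^{\epsilon'}$ for $\epsilon' \leq \epsilon$, and these assemble into a directed system whose colimit (union) is $\rad M$.

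\textbf{Main steps.}  Applying \cref{Prop:StructPropForQuotientsAndSubmodules}\,(i) to each inclusion yields a canonical barcode injection $\B{M^\epsilon} \hookrightarrow \B{M^{\epsilon'}}$ that preserves right endpoints and only moves left endpoints to the left.  By \cref{Prop:LimitedFunctoriality}, these canonical injections compose correctly, so the barcodes $\{\B{M^\epsilon}\}_{\epsilon > 0}$ form a directed system.  One defines a candidate barcode $\B{\rad M}$ whose intervals are the colimits of coherent chains $(I_\epsilon)$ through this system: along such a chain the right endpoints are constant and the left endpoints form a non-increasing net whose infimum is taken as the left endpoint of the limit interval.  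To produce the required isomorphism $\rad M \cong \bigoplus_{I \in \B{\rad M}} C(I)$, one picks for each chain a nested family of interval summands $C(I_\epsilon) \subseteq M^\epsilon$ whose union in $\rad M$ is a summand isomorphic to $C(I)$.

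\textbf{Main obstacle.}  The delicate step is compatibility: the decompositions of the $M^\epsilon$ furnished by the structure theorem are only unique up to non-canonical isomorphism via Azumaya, so there is no automatic matching between summands of $M^\epsilon$ and those of $M^{\epsilon'}$.  The crux is to show that the decompositions can be refined so that each summand $C(I_\epsilon) \subseteq M^\epsilon$ sits inside the summand of $M^{\epsilon'}$ matched to it by the canonical barcode injection.  This reduces to an inductive refinement argument (invoking Azumaya at each stage of the directed system) and is the heart of the proof in \cite{chazal2014observable}.  An appealing shortcut would be to argue that $\rad M$ itself is pfd under q-tameness and invoke the structure theorem directly, but this is false in general: for $M = \bigoplus_{n \geq 1} C(a_n, b]$ with $a_n \nearrow b$, the grade $(\rad M)_b = M_b$ is infinite-dimensional, so the limit construction cannot be sidestepped.
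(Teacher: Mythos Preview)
The paper does not prove \cref{Thm:StructForQTameRads}; it is quoted as \cite[Corollary~3.6]{chazal2014observable} and used as a black box in \cref{Sec:QTameASP}. So there is no proof in this paper to compare your proposal against.

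That said, a few comments on your sketch. Your observation that each $M^\epsilon$ is \pfd and that $\rad M = \bigcup_{\epsilon>0} M^\epsilon$ is a directed union of \pfd submodules is correct, and your counterexample showing $\rad M$ need not itself be \pfd is valid. The canonical injections $\B{M^\epsilon}\hookrightarrow\B{M^{\epsilon'}}$ from \cref{Prop:StructPropForQuotientsAndSubmodules}\,(i) do form a coherent directed system of barcodes by \cref{Prop:LimitedFunctoriality}, so your candidate limit barcode is well-defined.

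The genuine gap is exactly where you place it, but it is more serious than ``inductive refinement invoking Azumaya at each stage'' suggests. The canonical injections match intervals, not submodules: there is no reason the chosen interval summand $C(I_\epsilon)\subseteq M^\epsilon$ lands inside the chosen summand $C(I_{\epsilon'})\subseteq M^{\epsilon'}$, and an Azumaya-style uniqueness argument only gives isomorphisms of decompositions, not containments. Making compatible choices along a countable cofinal sequence $\epsilon_1>\epsilon_2>\cdots$ requires, at minimum, showing that a given interval summand of $M^{\epsilon_n}$ can always be extended to an interval summand of $M^{\epsilon_{n+1}}$ of the predicted type, and then that the resulting nested union is complemented in $\rad M$. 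Neither step is automatic. The argument in \cite{chazal2014observable} handles this, but it is not a routine consequence of the tools in the present paper; your sketch correctly locates the difficulty without resolving it.
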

See also \cite[Theorem~3.2]{chazal2014observable} for a simultaneous generalization of both \cref{Thm:StructForQTameRads} and the structure theorem for \pfd persistence modules \cite{crawley2012decomposition}.

We state the following easy result without proof:
\begin{proposition}\label{UndecoratedBarcodeWellDefined}
For any interval-decomposable persistence module $M$, $\undecorate_{\rad M}=\undecorate_{M}$.  
\end{proposition}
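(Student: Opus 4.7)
The plan is to reduce to the single-interval case by observing that the radical operation commutes with direct sums, and then to check by a short case analysis that taking the radical of an interval module $C(I)$ preserves (and in fact in most cases does not alter) its undecorated barcode.

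First I would verify that for any family $\{M_\alpha\}$ of persistence modules, $\rad(\bigoplus_\alpha M_\alpha) = \bigoplus_\alpha \rad M_\alpha$. This holds because the transition maps of a direct sum are direct sums of transition maps, so $(\bigoplus_\alpha M_\alpha)^\epsilon = \bigoplus_\alpha M_\alpha^\epsilon$ at each $t$, and the submodules $M^\epsilon$ form an increasing nested family in $\epsilon \to 0^+$, so the union (i.e.\ the radical) commutes with direct sums. Since $M$ is assumed interval-decomposable, $M \cong \bigoplus_{I \in \B{M}} C(I)$, and hence $\rad M \cong \bigoplus_{I \in \B{M}} \rad C(I)$.

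Next I would compute $\rad C(I)$ for each type of interval $I$. A direct calculation shows that $C(I)^\epsilon_t = \K$ if and only if both $t$ and $t - \epsilon$ lie in $I$, and is $0$ otherwise. Taking the union over $\epsilon > 0$ yields:
\begin{itemize}
\item $\rad C(I) = 0$ when $I$ is a one-point interval $[a,a]$;
\item $\rad C(I) \cong C(I')$ otherwise, where $I'$ is obtained from $I$ by changing any finite endpoint decoration to ``open'' and leaving infinite endpoints unchanged.
\end{itemize}
In either case, $\undecorate(I') = \undecorate(I)$ when $I$ is not a singleton, and both $[a,a]$ and the zero module contribute nothing to the undecorated barcode. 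Hence the bijection between the summands of $M$ and of $\rad M$ induced by $I \mapsto I'$ (restricted to non-singleton $I$) is compatible with the undecoration map, and so $\undecorate(\B{\rad M}) = \undecorate(\B{M})$, which is the desired equality $\undecorate_{\rad M} = \undecorate_M$.

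The only mildly delicate point is justifying the direct-sum computation of $\rad$ for possibly infinite direct sums; once that is in hand, the proof reduces to the finite check above, so I do not expect any serious obstacle.
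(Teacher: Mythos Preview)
Your overall strategy is sound and is the natural way to prove this; the paper itself simply states the result without proof, calling it an ``easy result,'' so there is nothing to compare against.

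One small computational slip: the radical does \emph{not} open the right endpoint of an interval.  For instance, if $I=[a,b]$ with $a<b$, then $(\rad C(I))_b = \bigcup_{\epsilon>0}\im\varphi_{C(I)}(b-\epsilon,b)=\K$, since $b-\epsilon\in I$ for small $\epsilon$; hence $\rad C[a,b]=C(a,b]$, not $C(a,b)$.  In general, $\rad C(I)=C(I'')$ where $I''$ is obtained from $I$ by opening only a closed \emph{left} endpoint (this is what collapses a singleton $[a,a]$ to the zero module).  Fortunately this does not affect your conclusion: since $\undecorate$ forgets decorations, $\undecorate(I'')=\undecorate(I)$ regardless, and the rest of your argument goes through unchanged.
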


For $M$ q-tame, we define $\Bopen M$, the undecorated barcode of $M$, by taking $\Bopen M=\undecorate_{\rad M}$.  In view of \cref{UndecoratedBarcodeWellDefined}, for $M$ a persistence module  that is both q-tame and interval-decomposable, this definition coincides with the definition of $\Bopen M$ given above.

\begin{remark}
\cite{chazal2012structure} defines the undecorated barcode of a q-tame persistence module by first defining the decorated barcode of a q-tame persistence module $M$ using the rectangle measure formalism introduced in that paper, and then taking the undecorated barcode of $M$ to be the image of the decorated barcode of $M$ under $\undecorate$; it follows from the results of \cite{chazal2012structure} that the two definitions of the undecorated barcode of a q-tame persistence module are equivalent.
\end{remark}

\subsection{Algebraic Stability for q-Tame Persistence Modules}
Using the algebraic stability theorem for \pfd persistence modules, we now give our alternative proof of the algebraic stability theorem for q-tame persistence modules.

\begin{theorem}[{Algebraic Stability for q-Tame Modules \cite[Theorem 4.20]{chazal2012structure}}]\label{Thm:ASP_ForQTame}
If $M$ and $N$ are q-tame and are $(\delta+\epsilon)$-interleaved for all $\epsilon>0$, then there is a $\delta$-matching between $\Bopen M$ and $\Bopen N$. 
In particular, \[d_B(\Bopen M,\Bopen N)\leq d_I(M,N).\]
\end{theorem}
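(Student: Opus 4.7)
The plan is to reduce to the pfd algebraic stability theorem (\cref{Thm:pfdASP}) via approximation by pfd submodules. For $\eta > 0$, let $(\rad M)^\eta \subseteq \rad M$ be the submodule with $(\rad M)^\eta_t = \im \varphi_{\rad M}(t-\eta, t)$; since $\rad M$ inherits q-tameness from $M$, each $(\rad M)^\eta_t$ is finite-dimensional, so $(\rad M)^\eta$ is pfd. A $(\delta+\epsilon)$-interleaving pair $f, g$ between $M$ and $N$ restricts first to $\rad M, \rad N$ (using that the components of $f, g$ commute with transition maps, hence preserve their images), and then further to a $(\delta+\epsilon)$-interleaving between $(\rad M)^\eta$ and $(\rad N)^\eta$. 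Applying \cref{Thm:pfdASP} and undecorating barcodes yields a $(\delta+\epsilon)$-matching $\sigma_{\epsilon,\eta}: \Bopen{(\rad M)^\eta} \matching \Bopen{(\rad N)^\eta}$.

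Next, I would show $d_B(\Bopen M, \Bopen{(\rad M)^\eta}) \leq \eta$ via an explicit construction. By \cref{Thm:StructForQTameRads}, $\rad M \cong \bigoplus_{I\in \B{\rad M}} C(I)$. Since the operation $(\wildcard)^\eta$ preserves direct sums, $(\rad M)^\eta \cong \bigoplus_I C(I)^\eta$; a direct computation shows that $C(I)^\eta$ is the interval module obtained from $C(I)$ by shifting the left endpoint rightward by $\eta$, becoming zero if $I$ has length at most $\eta$. Passing to undecorated barcodes produces a canonical bijection from $\Bopen{(\rad M)^\eta}$ to the sub-barcode of $\Bopen M$ consisting of intervals of length greater than $\eta$: matched pairs share right endpoints and have left endpoints differing by exactly $\eta$, while any unmatched interval of $\Bopen M$ has length at most $\eta$. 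This is an $\eta$-matching. Combining with the analogous matching for $N$ and with $\sigma_{\epsilon,\eta}$ via \cref{Rmk:MatchingsTriangleIneq} yields a $(\delta+\epsilon+2\eta)$-matching $\tau_{\epsilon,\eta}: \Bopen M \matching \Bopen N$.

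To produce an actual $\delta$-matching rather than just the distance bound, I would take $\epsilon_n, \eta_n \to 0$ and extract a limit by a compactness/diagonalization argument. The key enabling fact is that q-tameness implies $\Bopen M$ and $\Bopen N$ contain, for any fixed $\zeta > 0$ and any compact set, only finitely many intervals of length at least $\zeta$ meeting that set; consequently, for each long interval $I$, the constraint that the $\tau_n$-partner of $I$ have endpoints within $\delta+\epsilon_n+2\eta_n$ of those of $I$ confines the partner to a finite set. Enumerating the intervals of $\Bopen M$ and $\Bopen N$ of length exceeding $2\delta$ and diagonalizing gives a subsequence on which $\tau_n$ converges pointwise to a matching $\tau$; passing the containment and coverage conditions through the limit shows that $\tau$ is a $\delta'$-matching for every $\delta' > \delta$, hence a $\delta$-matching. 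The bound $d_B(\Bopen M, \Bopen N) \leq d_I(M, N)$ is then immediate.

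The main obstacle is the third step. While the pfd reduction and the $\eta$-matching between $\Bopen M$ and $\Bopen{(\rad M)^\eta}$ are fairly mechanical once one chooses to work with $\rad M$ instead of $M$, the limiting argument requires careful use of q-tameness to ensure the pointwise limit of the $\tau_n$ really is a matching — in particular, that the injectivity constraint and the requirement that all sufficiently long intervals of $\Bopen N$ be hit are preserved in the limit, not merely the easier pointwise containment conditions.
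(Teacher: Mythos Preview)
Your proposal is correct and follows the same overall strategy as the paper: approximate by \pfd modules, apply \cref{Thm:pfdASP}, bridge back to the undecorated barcodes via explicit small matchings, and pass to the limit. Two execution details differ. First, the paper works with $M^\epsilon$ rather than $(\rad M)^\eta$ and, instead of restricting the interleaving as you do, simply invokes \cref{Lem:ModuleApproximations}\,(i) together with the triangle inequality (\cref{EasyInterleavingRemark}) to conclude that $M^\epsilon$ and $N^\epsilon$ are $(\delta+3\epsilon)$-interleaved; your restriction argument yields the tighter constant $\delta+\epsilon$, but either suffices since the parameter goes to zero. Second, the paper does not redo the compactness/diagonalization step but cites it as a black box, \cref{Thm:matchingLimit} (which is \cite[Theorem~4.10]{chazal2012structure}); this is precisely the result you sketch in your third paragraph, so you could invoke it directly and avoid the part you identify as the main obstacle.
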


To prepare for the proof of the theorem, we make some easy technical observations, leaving the proofs to the reader:
\begin{lemma}\label{Lem:ModuleApproximations}
Let $M$ be a q-tame persistence module.
\begin{enumerate}[(i)]
\item $M$ and $M^\epsilon$ are $\epsilon$-interleaved.  
\item For any $\epsilon>0$, $M^\epsilon$ is \pfd  
\item There is a canonical $\epsilon$-matching $\B{\rad M} \matching \B{M^\epsilon}$.
\end{enumerate}
\end{lemma}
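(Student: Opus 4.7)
The plan is to establish the three parts in sequence, with (iii) being the substantive step built on the first two.

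For (i), I would exhibit explicit $\epsilon$-interleaving morphisms built from the transition maps of $M$. Define $f : M \to M^\epsilon(\epsilon)$ with $f_t$ the corestriction of $\varphi_M(t, t+\epsilon) : M_t \to M_{t+\epsilon}$ onto its image $M^\epsilon_{t+\epsilon}$, and define $g : M^\epsilon \to M(\epsilon)$ with $g_t$ the restriction of $\varphi_M(t, t+\epsilon)$ to the subspace $M^\epsilon_t \subseteq M_t$. Since the transition maps of $M^\epsilon$ are inherited from those of $M$, the interleaving identities $\shift{g}{\epsilon} \circ f = \translation{M}{2\epsilon}$ and $\shift{f}{\epsilon} \circ g = \translation{M^\epsilon}{2\epsilon}$ reduce directly to the composition law $\varphi_M(t+\epsilon, t+2\epsilon) \circ \varphi_M(t, t+\epsilon) = \varphi_M(t, t+2\epsilon)$.

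Part (ii) is a one-line consequence of q-tameness: for $\epsilon > 0$, $\dim M^\epsilon_t = \rank \varphi_M(t-\epsilon, t) < \infty$.

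For (iii), my plan is to apply the induced matching construction to the natural inclusion $j : M^\epsilon \hookrightarrow \rad M$, taking the desired matching $\B{\rad M} \matching \B{M^\epsilon}$ to be the reverse of the canonical injection $\barc{j} : \B{M^\epsilon} \hookrightarrow \B{\rad M}$. The key observation is that the cokernel of $j$ is $\epsilon$-trivial: for any $x \in \rad M_t \subseteq M_t$, we have $\varphi_M(t, t+\epsilon)(x) \in \im \varphi_M(t, t+\epsilon) = M^\epsilon_{t+\epsilon}$, so $\varphi_{\rad M / M^\epsilon}(t, t+\epsilon) = 0$. Combined with $\ker j = 0$, applying \cref{Thm:InducedMatching} would yield precisely an $\epsilon$-matching: every interval of $\B{M^\epsilon}$ is matched, every interval of $\B{\rad M}$ of length exceeding $\epsilon$ is matched, matched pairs share a right endpoint (by $0$-triviality of $\ker j$), and their left endpoints differ by at most $\epsilon$ (by $\epsilon$-triviality of $\coker j$).

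The main obstacle is that \cref{Prop:StructPropForQuotientsAndSubmodules} and \cref{Thm:InducedMatching} are stated for p.f.d. persistence modules, whereas by \cref{Thm:StructForQTameRads}, $\rad M$ is only guaranteed to be q-tame and interval-decomposable. To justify the application, I would adapt the argument to the asymmetric setting in which the source of the monomorphism is p.f.d. and the target is q-tame and interval-decomposable. The inequality of \cref{Lem:StructPropLemma} is vacuous when the right-hand side is infinite, so it suffices to treat the finite case; there the dimension-counting argument should carry over using q-tameness of $\rad M$ to select a suitable test point $t \in I$ beyond the finitely many relevant right endpoints that arise.
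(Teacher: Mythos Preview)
The paper does not prove this lemma; it is introduced as an ``easy technical observation'' with proof left to the reader, so there is no argument to compare against. Your treatments of (i) and (ii) are correct and are surely what is intended.

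For (iii), your plan---apply the induced-matching machinery to the inclusion $j:M^\epsilon\hookrightarrow\rad M$, using that $\ker j=0$ and $\coker j$ is $\epsilon$-trivial---is the natural one, and you are right to flag the obstacle that $\rad M$ need not be \pfd Your proposed extension of \cref{Lem:StructPropLemma} does go through, but one step requires more than ``should carry over'': selecting $t\in I=\interval{b}{d}$ beyond every interval $J=\interval{b'}{d'}\in\B{\rad M}$ with $b'\le b$ and $d'<d$. There may be infinitely many such $J$, so q-tameness must be invoked explicitly. One way: choose real numbers $s_0<s_1$ in $I$; every such $J$ with $d'>s_1$ then contains $[s_0,s_1]$ (since $b'\le b$ forces $s_0\in J$), and q-tameness of $\rad M$ bounds the number of intervals of $\B{\rad M}$ containing $[s_0,s_1]$. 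These finitely many $J$ have a largest right endpoint $d^*<d$, and any $t\in I$ with $t>\max(s_1,d^*)$ lies beyond every bad $J$. With this choice of $t$ the remainder of the proof of \cref{Lem:StructPropLemma}, and hence the relevant parts of \cref{Prop:StructPropForQuotientsAndSubmodules}\,(i) and \cref{Thm:InducedMatching}\,(ii), carry over unchanged and yield the claimed $\epsilon$-matching.
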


\Cref{Lem:ModuleApproximations}\,(i,ii) implies in particular that a q-tame persistence module can be approximated arbitrarily well by a \pfd persistence module.

\begin{proof}[Proof of \cref{Thm:ASP_ForQTame}]
$M$ and $N$ are $(\delta+\epsilon)$-interleaved for all $\epsilon>0$, so by \cref{Lem:ModuleApproximations}\,(i,ii) and the triangle inequality for interleavings (\cref{EasyInterleavingRemark}), $M^\epsilon$ and $N^\epsilon$ are \pfd and are $(\delta+3\epsilon)$-interleaved.  By \cref{Thm:pfdASP}, there is a $(\delta+3\epsilon)$-matching between $\B{M^\epsilon}$ and $\B{N^\epsilon}$.  Thus by \Cref{Lem:ModuleApproximations}\,(iii) and the triangle inequality for $\delta$-matchings (\cref{Rmk:MatchingsTriangleIneq}), there is a $(\delta+5\epsilon)$-matching between $\B{\rad M}$ and $\B{\rad N}$.  This induces a $(\delta+5\epsilon)$-matching between $\Bopen M=\Bopen {\rad M}$ and $\Bopen N=\Bopen {\rad N}$.

Since this is true for all $\epsilon>0$, we conclude that \[d_B(\Bopen M,\Bopen N)\leq d_I(M,N).\] Finally, the existence of a $\delta$-matching between $\Bopen M$ and $\Bopen N$ follows by \cref{Thm:matchingLimit} below.
\end{proof}

\begin{theorem}[{\cite[Theorem 4.10]{chazal2012structure}}]\label{Thm:matchingLimit}
If there is a $(\delta+\epsilon)$-matching between $\Bopen M$ and $\Bopen N$ for all $\epsilon>0$, then there is a $\delta$-matching between $\Bopen M$ and $\Bopen N$.
\end{theorem}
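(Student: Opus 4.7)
The plan is to produce a $\delta$-matching as the pointwise limit of a subsequence extracted from the given $(\delta+1/n)$-matchings, using q-tameness to supply the compactness needed. For each $n \in \Nbb$, let $\sigma_n : \Bopen M \matching \Bopen N$ be a $(\delta + 1/n)$-matching. I would first observe that $\Bopen M$ and $\Bopen N$ are countable: for each $k \geq 1$, any interval of length $> 1/k$ contains some $[s, s+1/k]$ with $s \in \mathbb{Q}$, and q-tameness gives that only finitely many intervals contain any such pair, so a countable union yields countability.

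The key finiteness input is the following: for every interval $I = (b,d) \in \Bopen M$ with $d - b = 2\delta + \eta$ for some $\eta > 0$, the set of values $\{\sigma_n(I) : 1/n < \eta/4\}$ is finite. Indeed, such $I$ has length exceeding $2(\delta + 1/n)$, hence must be matched in $\sigma_n$; and any matched partner $J = (b', d')$ satisfies $b' \leq b + \delta + 1/n < b + \delta + \eta/4$ and $d' \geq d - \delta - 1/n > d - \delta - \eta/4$. Setting $s = b + \delta + \eta/4$ and $t = d - \delta - \eta/4$, we have $s < t$ by choice of $\eta$, so every such $J$ contains $[s,t]$; q-tameness of $N$ then bounds the number of such $J$ by $\rank \varphi_N(s,t) < \infty$. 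A symmetric statement holds for long intervals in $\Bopen N$, and edge cases where $b = -\infty$ or $d = +\infty$ are handled by choosing appropriate finite $s, t$ on the remaining side.

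Next I perform a diagonal extraction. Enumerate the intervals of length $> 2\delta$ in $\Bopen M$ as $I_1, I_2, \ldots$ and in $\Bopen N$ as $J_1, J_2, \ldots$. By the finiteness above and pigeonhole, I inductively refine the sequence $(n)$ to a nested sequence of subsequences along which $\sigma_n(I_k)$ and $\sigma_n^{-1}(J_k)$ become eventually constant. The diagonal subsequence $(n_m)$ then has the property that for every long $I \in \Bopen M$, the value $\sigma_{n_m}(I)$ is eventually constant in $m$, and symmetrically for every long $J \in \Bopen N$. I define $\sigma$ by assigning to each long $I$ the stable value of $\sigma_{n_m}(I)$, and including the pair $(I,J)$ whenever $J$ is long and $I$ is the stable value of $\sigma_{n_m}^{-1}(J)$ (even if such $I$ is short). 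Consistency between the two definitions follows from each $\sigma_{n_m}$ being a bijection, and injectivity of $\sigma$ follows because $\sigma(I) = \sigma(I') = J$ would force $\sigma_{n_m}(I) = \sigma_{n_m}(I') = J$ for large $m$, contradicting that $\sigma_{n_m}$ is a matching.

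Finally, I verify the three defining conditions of a $\delta$-matching: the inclusions $\Bopen M^{2\delta} \subseteq \dom \sigma$ and $\Bopen N^{2\delta} \subseteq \im \sigma$ hold by construction, and for any pair $\sigma(b,d) = (b',d')$ we have $|b - b'|, |d - d'| \leq \delta + 1/n_m$ for all large $m$, hence $\leq \delta$ in the limit. The \textbf{main obstacle} is the finiteness step: the entire argument rests on the fact that q-tameness bounds the number of intervals containing any fixed $[s,t]$ with $s<t$, and one must carefully exhibit such a witnessing $[s,t]$ that lies in \emph{every} candidate partner of a long interval $I$, uniformly for all sufficiently large $n$. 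Once this is arranged, the diagonal extraction and the verification of the matching axioms are routine bookkeeping.
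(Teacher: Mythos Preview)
The paper does not give its own proof of this statement; it simply cites \cite[Theorem 4.10]{chazal2012structure} and remarks that the proof there is self-contained. So there is no in-paper argument to compare against.

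Your proposed argument is the standard compactness/diagonal-extraction proof, and it is essentially correct. A few comments on places where your writeup is a bit thin. First, your treatment of infinite endpoints is terse: when $I=(-\infty,d)$ the $\eta$-parametrization of length breaks down, and you should instead note directly that any $(\delta+1/n)$-partner must also have left endpoint $-\infty$ and right endpoint in $[d-\delta-1,\,d+\delta+1]$ (for $1/n<1$), so choosing any $s<t<d-\delta-1$ gives a witnessing pair; similarly for $d=\infty$ and for $(-\infty,\infty)$. Second, your appeal to q-tameness to bound the number of candidate partners is correct but worth spelling out: the count of intervals in $\Bopen N$ containing $[s,t]$ with $s<t$ is exactly $\rank\varphi_{\rad N}(s,t)\le\rank\varphi_N(s,t)<\infty$, which is how q-tameness enters. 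Third, to see that your limit relation $\sigma$ is a genuine matching you should check all four injectivity cases (long/long, long/short on either side), not just the one you mention; each reduces, as you indicate, to the injectivity of $\sigma_{n_m}$ for large $m$. With these details filled in, the argument goes through.
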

We note that the proof of \cref{Thm:matchingLimit} is self-contained; in invoking this result and its proof, we are not implicitly invoking any other results of \cite{chazal2012structure}.

\section{Converse Algebraic Stability}\label{Sec:ConverseToASP}
Since the algebraic stability theorem and its converse go hand in hand, for completeness we include a proof of the converse here.  We first present the result under the assumption that the persistence modules are interval-decomposbale, following \cite{lesnick2013theory}.  Together with \cref{Thm:pfdASP}, this establishes the isometry theorem for \pfd persistence modules, \cref{Thm:Isometry}. 
  
Following \cite{chazal2012structure}, we then establish a version of converse algebraic stability for q-tame persistence modules, as an easy corollary of the result in the interval-decomposable setting.  Together with \cref{Thm:ASP_ForQTame}, this gives the isometry theorem for q-tame persistence modules.

\subsection{Converse Algebraic Stability for Interval-Decomposable Persistence Modules}

\begin{lemma}\label{Lem:CyclicConverse} 
Let $\delta\geq 0$. 
\begin{itemize}
\item[(i)] If $\interval{ \birth}{\death }$, $\interval{ \birth'}{\death' }$ are intervals such that 
\begin{align*}
\interval{ \birth}{\death }&\subseteq \interval{ \birth'-\delta}{\death'+\delta }\textup,\\
\interval{ \birth'}{\death' }&\subseteq\interval{ \birth-\delta}{\death+\delta },
\end{align*}
then $C\interval{ \birth}{\death }$ and $C\interval{ \birth'}{\death' }$ are $\delta$-interleaved.
\item[(ii)] If $ \birth + \epsilon \geq \death $, then $C\interval{\birth}{\death }$ and the trivial module are $\delta$-interleaved.
\end{itemize}
\end{lemma}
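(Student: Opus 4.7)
The plan is to construct explicit $\delta$-interleaving morphisms in case (i) and observe that in case (ii) both interleaving morphisms are forced to be zero, so the only condition to verify reduces to the vanishing of a single transition map. Throughout, the computations are essentially combinatorial bookkeeping with decorated endpoints, since every component of every morphism in sight is either $0$ or the identity on $\K$.

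For part (i), first unpack the hypotheses: $\interval{\birth}{\death}\subseteq\interval{\birth'-\delta}{\death'+\delta}$ and $\interval{\birth'}{\death'}\subseteq\interval{\birth-\delta}{\death+\delta}$ translate to $\birth'-\delta\leq\birth$, $\birth-\delta\leq\birth'$, $\death\leq\death'+\delta$, and $\death'\leq\death+\delta$ as comparisons of decorated endpoints. Note that $\shift{C\interval{\birth'}{\death'}}{\delta} = C\interval{\birth'-\delta}{\death'-\delta}$, and similarly for the other direction. Define $f:C\interval{\birth}{\death}\to C\interval{\birth'-\delta}{\death'-\delta}$ by letting $f_t=\id_\K$ whenever $t\in \interval{\birth}{\death}\cap\interval{\birth'-\delta}{\death'-\delta}$ and $f_t=0$ otherwise; define $g$ symmetrically. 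Using the endpoint inequalities, $\max(\birth,\birth'-\delta)=\birth$ and $\min(\death,\death'-\delta)=\death'-\delta$, so the support of $f$ is the interval $\interval{\birth}{\death'-\delta}$ (vacuous if $\death'\leq\birth+\delta$, in which case $f=0$; analogously for $g$). Naturality of $f$ and $g$ is automatic since each is the identity on a (possibly empty) subinterval and zero elsewhere, and both source and target have identity transition maps inside their supports.

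It remains to check the interleaving equations $\shift{g}{\delta}\circ f = \translation{C\interval{\birth}{\death}}{2\delta}$ and $\shift{f}{\delta}\circ g = \translation{C\interval{\birth'}{\death'}}{2\delta}$. Both sides land in $C\interval{\birth-2\delta}{\death-2\delta}$, and both are componentwise either $0$ or $\id_\K$, so it suffices to compare their supports. The composite $(\shift{g}{\delta}\circ f)_t$ is the identity exactly when $t$ lies in the support of $f_t$ and $t+\delta$ lies in the support of $g$. Plugging in and using $\birth'-\delta\leq\birth$ and $\death-\delta\leq\death'$ (hence $\death-2\delta\leq\death'-\delta$) gives support $\interval{\birth}{\death-2\delta}$, which is exactly the support of the transition morphism $\translation{C\interval{\birth}{\death}}{2\delta}$. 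The other equation is symmetric.

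For part (ii) (where the hypothesis should read $\birth+2\delta\geq\death$), any morphism to or from the zero module is the zero morphism, so a $\delta$-interleaving exists if and only if $\translation{C\interval{\birth}{\death}}{2\delta}=0$ and $\translation{0}{2\delta}=0$. The second holds trivially. For the first, the component at $t$ is the identity precisely when both $t$ and $t+2\delta$ lie in $\interval{\birth}{\death}$; with decorated endpoints this occurs for some $t$ iff $\birth+2\delta < \death$, which is ruled out by the hypothesis. Hence the transition vanishes and the $\delta$-interleaving exists. The only genuine subtlety anywhere in the argument is the strict versus non-strict comparison of decorated endpoints, which is the same bookkeeping already used in the definition of $\I_\R$ and $\D^\epsilon$ in Section~2 and Section~3.2.
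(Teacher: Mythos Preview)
Your argument is correct and is exactly the kind of direct verification the paper has in mind; the paper itself gives no proof beyond ``We leave the straightforward details to the reader,'' so there is nothing to compare. Your observation that the hypothesis in (ii) should read $\birth+2\delta\geq\death$ rather than $\birth+\epsilon\geq\death$ is also right---this is a typo in the statement, as is clear from how the lemma is used in the proof of \cref{Thm:ConverseStabilityForDecomposableMods}.
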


\begin{proof} We leave the straightforward details to the reader. \end{proof} 

\begin{theorem}[{\cite[Theorem 3.3]{lesnick2013theory}}]\label{Thm:ConverseStabilityForDecomposableMods}
If $M$ and $N$ are interval-decomposable persistence modules and there exists a $\delta$-matching between $\B M$ and $\B N$, then $M$ and $N$ are $\delta$-interleaved.
In particular, \[d_B(\B M,\B N)\geq d_I(M,N).\]
\end{theorem}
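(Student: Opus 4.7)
The plan is to exploit the interval-decomposability of $M$ and $N$ together with Lemma~\ref{Lem:CyclicConverse} to build a $\delta$-interleaving summand-by-summand, following the matching $\sigma : \B M \matching \B N$. Concretely, I would first write
\[
M \cong \bigoplus_{I \in \B M} C(I), \qquad N \cong \bigoplus_{J \in \B N} C(J),
\]
and then partition these direct sums according to whether each interval is in the domain/image of $\sigma$. This yields
\[
M \cong \Bigl(\bigoplus_{I \in \dom\sigma} C(I)\Bigr) \oplus \Bigl(\bigoplus_{I \in \B M \setminus \dom\sigma} C(I)\Bigr),
\]
and analogously for $N$ using $\im\sigma$ and $\B N \setminus \im\sigma$.

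Next I would construct a $\delta$-interleaving on each piece. For every matched pair $(I, J) \in \sigma$, the definition of a $\delta$-matching gives the inclusion conditions required by Lemma~\ref{Lem:CyclicConverse}(i), so $C(I)$ and $C(J)$ are $\delta$-interleaved. For any unmatched interval $I = \interval{\birth}{\death} \in \B M \setminus \dom\sigma$, the definition of a $\delta$-matching (via $\B M^{2\delta} \subseteq \dom\sigma$) forces $\birth + 2\delta \geq \death$, so Lemma~\ref{Lem:CyclicConverse}(ii) gives a $\delta$-interleaving of $C(I)$ with the zero module; the symmetric statement holds for $J \in \B N \setminus \im\sigma$.

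I would then assemble these pieces. The key auxiliary observation I would verify in one line is that the direct sum of $\delta$-interleavings is again a $\delta$-interleaving: given families $f_k : M_k \to \shift{N_k}{\delta}$ and $g_k : N_k \to \shift{M_k}{\delta}$ satisfying the interleaving identities, the component-wise morphisms $\bigoplus_k f_k$ and $\bigoplus_k g_k$ satisfy the same identities on $\bigoplus_k M_k$ and $\bigoplus_k N_k$, because shift and direct sum commute and both defining identities are checked componentwise. Taking the direct sum of the interleavings constructed above across all matched pairs and all unmatched intervals (on both sides) then yields the desired $\delta$-interleaving between $M$ and $N$. The bound $d_I(M,N) \leq d_B(\B M, \B N)$ is an immediate consequence by taking the infimum over $\delta$.

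The only mildly non-routine step is the bookkeeping to make sure every interval on both sides is accounted for exactly once; this is just the observation that $\sigma$ restricts to a bijection $\dom\sigma \to \im\sigma$ and leaves $\B M \setminus \dom\sigma$ and $\B N \setminus \im\sigma$ separately to be handled by Lemma~\ref{Lem:CyclicConverse}(ii). There is no genuine obstacle here — Lemma~\ref{Lem:CyclicConverse} does all the work, and the decomposability hypothesis reduces the theorem to a direct sum over these three classes of summands.
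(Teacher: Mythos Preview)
Your proposal is correct and follows essentially the same approach as the paper: decompose $M$ and $N$ into matched and unmatched summands, apply \cref{Lem:CyclicConverse}(i) to each matched pair and \cref{Lem:CyclicConverse}(ii) to each unmatched interval, then take the direct sum of the resulting interleavings. The paper organizes the assembly step slightly differently---defining $f$ and $g$ directly as zero on the unmatched summands and then checking the interleaving identities using $\translation M {2\delta}(M_\circ)=0$---but this is just a rephrasing of your ``direct sum of $\delta$-interleavings is a $\delta$-interleaving'' observation.
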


\begin{proof}%
We may assume without loss of generality that 
\[M=\bigoplus_{I\in \B M} C(I),\qquad N=\bigoplus_{I\in \B N} C(I). \]
Let $\sigma:\B M\matching \B N$ be a $\delta$-matching, and let
\[
\begin{aligned}
M_\bullet=&\bigoplus_{I\in \dom(\sigma)} C(I),\\
M_\circ=&\bigoplus_{I \in \ker(\sigma)} C(I),
\end{aligned}
\quad
\begin{aligned}
N_\bullet=&\bigoplus_{I\in \im(\sigma)} C(I),\\
N_\circ=&\bigoplus_{I \in \coker(\sigma)} C(I),
\end{aligned}
\]
where $\ker(\sigma)$ and $\coker(\sigma)$ denote the complements of $\dom(\sigma)$ and $\im(\sigma)$ in $\B{M}$ and $\B{N}$, respectively. 

Clearly, $M=M_\bullet\oplus M_\circ$ and $N=N_\bullet\oplus N_\circ$.  By \cref{Lem:CyclicConverse}\,(i), for each $(I,J)\in \sigma$ we may choose a pair of $\delta$-interleaving morphisms 
\[f_I:C(I)\to C(J)(\delta),\qquad g_J:C(J)\to C(I)(\delta).\]
These morphisms induce a pair of $\delta$-interleaving morphisms
\[f_\bullet:M_\bullet\to N_\bullet(\delta),\qquad g_\bullet:N_\bullet\to M_\bullet(\delta).\]

Define a morphism $f:M\to N$ by taking the restriction of $f$ to $M_\bullet$ to be equal to $f_\bullet$ and taking the restriction of $f$ to $M_\circ$ to be the trivial morphism.  Symmetrically, define a morphism $g:N\to M$ by taking the restriction of $g$ to $N_\bullet$ to be equal to $g_\bullet$ and taking the restriction of $g$ to $N_\circ$ to be the trivial morphism.  By \cref{Lem:CyclicConverse}\,(ii), \[\translation M {2\delta} (M_\circ)=\translation N {2\delta} (N_\circ)=0.\]  From this fact and the fact that $f_\bullet$ and $g_\bullet$ are $\delta$-interleaving morphisms, it follows that $f$ and $g$ are $\delta$-interleaving morphisms as well.
\end{proof}

\vspace{0pt} %

\subsection{Converse Algebraic Stability for q-Tame Persistence Modules}\label{Sec:qTameConverse}
The converse algebraic stability theorem for q-tame persistence modules, as presented in \cite{chazal2012structure}, follows easily from \cref{Thm:ConverseStabilityForDecomposableMods}:

\begin{corollary}[{\cite[Theorem 4.11$''$]{chazal2012structure}}]\label{ConverseASTqTame}
For $M$ and $N$ q-tame persistence modules, \[d_B(\Bopen M,\Bopen N)\geq d_I(M,N).\]
\end{corollary}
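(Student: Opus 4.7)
The plan is to reduce this to \cref{Thm:ConverseStabilityForDecomposableMods} by approximating $M$ and $N$ with their radicals. By \cref{Thm:StructForQTameRads}, $\rad M$ and $\rad N$ are interval-decomposable, and by definition $\Bopen M = \undecorate(\B{\rad M})$ and $\Bopen N = \undecorate(\B{\rad N})$. Since $\rad$ destroys singleton summands and converts left-closed finite endpoints to left-open ones while preserving right-endpoint decorations, every interval appearing in $\B{\rad M}$ or $\B{\rad N}$ is automatically left-open or left-infinite, so the undecoration map only forgets right-endpoint decorations and has finite fibers over each open interval. I can therefore choose bijective lifts $\Bopen M \to \B{\rad M}$ and $\Bopen N \to \B{\rad N}$ refining the projections.

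Fix any $\delta > d_B(\Bopen M, \Bopen N)$, pick a $\delta$-matching $\sigma : \Bopen M \matching \Bopen N$, and transport it through the lifts to obtain a matching $\tilde\sigma : \B{\rad M} \matching \B{\rad N}$. For every pair $(I, J) \in \tilde\sigma$, the left and right endpoints of $I$ and $J$ lie pairwise within $\delta$ of each other (because this holds for $\undecorate(I), \undecorate(J)$ under $\sigma$), and the length-$(>2\delta)$ subsets $\B{\rad M}^{2\delta}, \B{\rad N}^{2\delta}$ (in the decorated sense) are carried bijectively onto $\Bopen M^{2\delta}, \Bopen N^{2\delta}$ by the lifts. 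Translating these endpoint bounds into the decorated containment conditions of \cref{Sec:BottleneckDistance} yields the required inclusions at parameter $\delta$ in every case, except when a right-closed endpoint of $I$ is matched to a right-open endpoint of $J$ (or vice versa) at distance exactly $\delta$, which requires relaxing to any $\delta' > \delta$. Thus $\tilde\sigma$ is a decorated $\delta'$-matching for every $\delta' > \delta$.

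By \cref{Thm:ConverseStabilityForDecomposableMods} applied to the interval-decomposable modules $\rad M$ and $\rad N$, these are $\delta'$-interleaved for every $\delta' > \delta$. Because $d_I(M, \rad M) = d_I(N, \rad N) = 0$, for every $\eta > 0$ both $M, \rad M$ and $N, \rad N$ are $\eta$-interleaved (using \cref{EasyInterleavingRemark} to enlarge interleaving parameters as needed), so the triangle inequality gives a $(\delta' + 2\eta)$-interleaving of $M$ and $N$. Sending $\eta \downarrow 0$, then $\delta' \downarrow \delta$, and finally $\delta \downarrow d_B(\Bopen M, \Bopen N)$ yields $d_I(M, N) \leq d_B(\Bopen M, \Bopen N)$. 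The main technical subtlety is the lift $\sigma \mapsto \tilde\sigma$ together with the strict-vs-nonstrict right-endpoint condition noted above; both are absorbed by the slack $\delta' > \delta$ that vanishes in the final limit, which is affordable precisely because we only need to control $d_B \geq d_I$ rather than produce a specific matching at parameter $\delta$.
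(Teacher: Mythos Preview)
Your proof is correct and follows essentially the same route as the paper: pass to the radicals, apply \cref{Thm:ConverseStabilityForDecomposableMods} to the interval-decomposable modules $\rad M$ and $\rad N$, and conclude via $d_I(M,\rad M)=d_I(N,\rad N)=0$ together with the triangle inequality. The paper compresses your entire lift-and-decoration discussion into the single assertion ``clearly $d_B(\B{\rad M},\B{\rad N})=\delta$''; your explicit treatment of the $\delta'>\delta$ slack is a valid way to unpack that step, though note that your justification for the intervals of $\B{\rad M}$ being left-open tacitly describes the effect of $\rad$ on interval summands of $M$, whereas $M$ itself need not be interval-decomposable---the cleaner argument is that $\rad(\rad M)=\rad M$, which forces every interval of the decomposable module $\rad M$ to be $\rad$-fixed.
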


\begin{proof}
Let $\delta=d_B(\Bopen M,\Bopen N)$.  By definition, $d_B(\Bopen {\rad M},\Bopen {\rad N})=\delta$.  Clearly, then, we also have that $d_B(\B {\rad M}, \B{\rad N})=\delta$, so by \cref{Thm:ConverseStabilityForDecomposableMods}, $d_I(\rad M,\rad N)\leq \delta$.  $d_I(M,\rad M)=0=d_I(N,\rad N),$ so by the triangle inequality, we conclude that $d_I(M,N)\leq \delta$.
\end{proof}

Together, \cref{Thm:ASP_ForQTame,ConverseASTqTame} give the isometry theorem for q-tame persistence modules:

\begin{theorem}[{\cite[Theorem 4.11]{chazal2012structure}}]
For $M$ and $N$ q-tame persistence modules, \[d_B(\Bopen M,\Bopen N)=d_I(M,N).\]
\end{theorem}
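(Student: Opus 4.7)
The plan is to observe that this isometry statement is the conjunction of two inequalities, each of which has already been established in the preceding subsections, so the proof reduces to a one-line combination.

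More explicitly, I would first invoke \cref{Thm:ASP_ForQTame} (algebraic stability for q-tame modules), which provides the inequality
\[
d_B(\Bopen M,\Bopen N)\leq d_I(M,N).
\]
Then I would invoke \cref{ConverseASTqTame} (converse algebraic stability for q-tame modules), which provides the reverse inequality
\[
d_B(\Bopen M,\Bopen N)\geq d_I(M,N).
\]
Combining the two yields the desired equality. No additional technical machinery is needed at this point, because all the heavy lifting has already been done: \cref{Thm:ASP_ForQTame} was deduced from \cref{Thm:pfdASP} (the \pfd algebraic stability theorem, which was itself a corollary of the induced matching theorem \cref{Thm:InducedMatching}) by approximating $M$ and $N$ by the \pfd modules $M^\epsilon$, $N^\epsilon$ and passing to the limit via \cref{Thm:matchingLimit}; while \cref{ConverseASTqTame} was deduced from \cref{Thm:ConverseStabilityForDecomposableMods} using the fact that $d_I(M,\rad M) = 0 = d_I(N,\rad N)$ together with the triangle inequality for $d_I$.

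Since both directions are already in hand, there is effectively no obstacle: the main interpretive remark is simply that $\Bopen M = \Bopen{\rad M}$ by definition, so the bottleneck distance on undecorated barcodes computed from the interval-decomposable approximations $\rad M$ and $\rad N$ agrees with $d_B(\Bopen M,\Bopen N)$. The proof therefore consists of a single sentence citing \cref{Thm:ASP_ForQTame,ConverseASTqTame} and asserting the equality.
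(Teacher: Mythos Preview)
Your proposal is correct and matches the paper's approach exactly: the paper simply states that \cref{Thm:ASP_ForQTame,ConverseASTqTame} together give the isometry theorem for q-tame persistence modules, with no further argument needed. Your additional recap of how those results were obtained is accurate but not required for the proof itself.
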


\section*{Acknowledgments}
We thank Vin de Silva, Peter Landweber, Daniel M\"{u}llner, and the anonymous referees for valuable feedback on this work; this paper has benefitted from their input in several ways.  We also thank Guillaume Troianowski and Daniel for providing us with a copy of their manuscript \cite{mullner2011proximity} and for helpful correspondence regarding their related work.  In addition, we
thank William Crawley-Boevey and Vin for enlightening discussions about q-tame persistence modules.  UB was partially supported by the {\sc Toposys} project FP7-ICT-318493-STREP.  ML thanks the Institute for Advanced Study and the Institute for Mathematics and its Applications for their support and hospitality during the course of this project.  ML was partially supported by NSF grant DMS-1128155.  Any opinions, findings, and conclusions or recommendations expressed in this material are those of the authors and do not necessarily reflect the views of the National Science Foundation.

\bibliography{Stability_Paper_Refs}

\end{document}